\numberwithin{equation}{section}
\theoremstyle{plain}
\newtheorem{theorem}{Theorem}[section]
\newtheorem{lemma}[theorem]{Lemma}
\newtheorem{definition}{Definition}[section]
\newtheorem*{remark}{Remark}
\newtheorem{corollary}[theorem]{Corollary}
\newtheorem{proposition}[theorem]{Proposition}
\newtheorem{question}[theorem]{Questions}
\newtheorem{example}[theorem]{Example}
\theoremstyle{definition}\newtheorem{exemple}[theorem]{Example}}
\newcommand{\reff}[1]{(\ref{#1})}
\def\T{ \mathbb T}
\def\H{H^\infty}
\def\D{{ \mathbb D}}
\def\C{{ \mathbb C}}
\def\N{{ \mathbb N}}
\def\e{\varepsilon}
\def\dis{\displaystyle}
\def\union{\cup}
\def\Union{\bigcup}
\def\inter{\cap}
\def\ov{\overline}
\def\ss{\subseteq}
\def\emp{\emptyset}
\def\buildrel#1_#2^#3{\mathrel{\mathop{\kern 0pt#1}\limits_{#2}^{#3}}}
\def\BP{Blaschke product}
\def\IBP{interpolating Blaschke product}
\begin{document}

\title []{One-component inner functions}


 \author{Joseph Cima}
\address{\small Department of Mathematics,
UNC,
Chapel Hill, North-Carolina,USA}
\email{cima@email.unc.edu}

 \author{Raymond Mortini}
  \address{
Universit\'{e} de Lorraine\\
 D\'{e}partement de Math\'{e}matiques et  
Institut \'Elie Cartan de Lorraine,  UMR 7502\\
 Ile du Saulcy\\
 F-57045 Metz, France} 
 \email{raymond.mortini@univ-lorraine.fr}

\subjclass{Primary 30J10; Secondary 30J05; }

\keywords{inner functions; interpolating Blaschke products; connected components; level sets}

\begin{abstract}
We explicitely unveil several classes of  inner functions $u$ in $\H$ with the property that
 there is $\eta\in ]0,1[$
such that the level set $\Omega_u(\eta):=\{z\in\D: |u(z)|<\eta\}$ is connected. 
These  so-called one-component inner functions play an important role in operator theory.
\end{abstract}

 \maketitle

\centerline{\small \sl Dedicated to the memory of Vadim Tolokonnikov}\medskip
\centerline {\small\the\day.\the \month.\the\year} \medskip
\section*{Introduction}

\begin{definition}
An inner function $u$ in $\H$ is said to be a
 {\it one-component inner function} if there is $\eta\in ]0,1[$
such that the level set (also called sublevel set or filled level set) $\Omega_u(\eta):=\{z\in\D: |u(z)|<\eta\}$ is connected.
\end{definition}

One-component inner functions, the collection of which we denote by $\mathfrak I_c$, were first studied by B. Cohn \cite{coh}  in connection with embedding theorems and Carleson-measures.
It was shown in \cite[p. 355] {coh} for instance that arclength on $\{z\in\D: |u(z)|=\e\}$ is such a measure whenever 
$$\Omega_u(\eta)=\{z\in\D: |u(z)|<\eta\}$$
is connected and $\eta<\e<1$.

 A thorough study of the class $\mathfrak I_c$ was given by  A.B. Aleksandrov \cite{alex} who showed the
 interesting result that $u\in \mathfrak I_c$ if and only if  there is a constant $C=C(u)$ such that for all $a\in \D$
 $$\sup_{z\in \D}\left| \frac{1-\ov {u(a)} u(z)}{1-\ov a z} \right| \leq C \frac{1-|u(a)|^2}{1-|a|^2}.$$

Many operator-theoretic applications are given in \cite{alex, almp, bes, bbk}. In our paper here we are interested
in explicit examples, which  are somewhat lacking in literature.  For example, if $S$ is the atomic inner function,
which is given by 

$$S(z)=\exp\left(- \frac{1+z}{1-z}\right),$$
then all level sets $\Omega_S(\eta)$, $0<\eta<1$ are connected, because these sets coincide with the disks
\begin{equation}\label{horo}
D_\eta:=\left\{z\in\D:\left |z- \frac{L}{L+1}\right|<\frac{1}{L+1}\right\},~ L:= \log \frac{1}{\eta},
\end{equation}
which are tangential  to the unit circle at $p=1$.

The scheme of our note here is as follows: in section \ref{levi} we prove a general result on level sets which will be the key for our approach to the problem of unveiling classes of one-component inner functions. Then in section \ref{geo} we first present with elementary geometric/function theoretic methods several examples and then we use Aleksandrov's criterion to achieve this goal.  
For instance, we prove that
$BS, B\circ S$ and $S\circ B$ are in $\mathfrak I_c$ whenever $B$ is a finite \BP. Considered are also \IBP s. 
It will further be  shown that, under the supremum norm, 
 $\mathfrak I_c$ is an open subset of the set of all inner functions and multiplicatively closed.
In the final section we give counterexamples. 

\section{Level sets}\label{levi}

We first begin with a topological property of the class of general level sets. Although 
 statement (1) is ``well-known" (the earliest appearance seems to be  in \cite[Theorem VIII,~ 31]{tsu}), we could nowhere locate  a proof.
  The argument that the result is a simple and direct consequence of the maximum principle is, in our viewpoint, not tenable.

\begin{lemma}\label{compos1}
Given a non-constant  inner function $u$ in $\H$ and $\eta\in \;]0,1[$, 
let  $\Omega:=\Omega_u(\eta)=\{z\in\D: |u(z)|<\eta\}$ be a level set. Suppose that $\Omega_0$ is a
 component (=maximal connected subset)  of $\Omega$.
 Then
\begin{enumerate}
\item[(1)]  $\Omega_0$  is a simply connected domain; that is, $\C\setminus \Omega_0$ has no bounded components 
\footnote{A shorter proof can be given by using the advanced definition that a  domain $G$ in $\C$ is simply connected if 
every curve in $G$ is contractible in $G$, or equivalently, if 
 for every Jordan curve $J$ in $G$ the interior of $J$ belongs to $G$. That depends though  on the Jordan curve theorem.}.

\item[(2)]  $\inf_{\Omega_0} |u|=0$.
\end{enumerate}
\end{lemma}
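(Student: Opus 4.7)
For (1) I would argue by contradiction. Suppose $\C\setminus\Omega_0$ admits a bounded component $K$. Since $\C\setminus\D$ is connected and disjoint from $\Omega_0$, it lies in the unbounded component $K_\infty$ of $\C\setminus\Omega_0$; hence $K$ is a compact subset of $\D$, and by definition of $\Omega_u(\eta)$ we have $|u(z_0)|\ge\eta$ for every $z_0\in K$. The goal is to derive the reverse inequality on $K$, exploiting the fact that $\Omega_0$ ``surrounds'' $K$. The quickest route, to which the footnote alludes, is via the Jordan curve theorem: one produces a Jordan curve $\gamma\subset\Omega_0$ enclosing $K$, observes that the interior $I(\gamma)$ lies in $\D$ because $\D$ is simply connected, and applies the usual maximum modulus principle on the bounded Jordan domain $I(\gamma)$, where $|u|<\eta$ on $\gamma$, to conclude $|u(z_0)|<\eta$ for every $z_0\in K\subset I(\gamma)$---the desired contradiction.

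An alternative, Jordan-curve-theorem-free route, which is presumably the one the authors pursue, is to work with the filled domain $\tilde\Omega_0:=\C\setminus K_\infty$. This set is open (as $K_\infty$ is closed), bounded, contained in $\D$, and connected: each bounded component $L$ of $\C\setminus\Omega_0$ must meet $\partial\Omega_0$ (otherwise $L$ would be clopen in $\C$), so $\Omega_0\cup L$ is connected for each such $L$, whence so is $\tilde\Omega_0$. The decisive step is then to show $|u|\le\eta$ on all of $\tilde\Omega_0$, which would embed $\tilde\Omega_0$ into the component of $\Omega_u(\eta)$ containing $\Omega_0$, namely $\Omega_0$ itself, contradicting $K\subset\tilde\Omega_0\setminus\Omega_0$. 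This is the main obstacle, because $\partial\tilde\Omega_0$ may touch $\T$ where only $|u|\le 1$ is available, and so the bare maximum modulus principle does not suffice; a supplementary argument tailored to inner functions is required.

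For (2) I would use (1). Suppose for contradiction that $m:=\inf_{\Omega_0}|u|>0$. Since $\Omega_0$ is simply connected by (1), let $\phi:\D\to\Omega_0$ be a Riemann map. Then $u\circ\phi$ is holomorphic and nonvanishing on $\D$ with $m\le|u\circ\phi|<\eta$, so $h:=\log|u\circ\phi|$ is a bounded harmonic function on $\D$ and equals the Poisson integral of its non-tangential boundary values. Combining Lindel\"of's theorem (to identify $(u\circ\phi)^*(\zeta)=u^*(\phi^*(\zeta))$) with the classical fact that the conformal boundary correspondence preserves Lebesgue-null sets on $\T$, and with the fact that $|u^*|=1$ a.e.\ on $\T$, the boundary values of $h$ come out to be $\log\eta$ on $B:=(\phi^*)^{-1}(\partial\Omega_0\cap\D)$ (since $|u|=\eta$ at such points) and $0$ on $A:=(\phi^*)^{-1}(\partial\Omega_0\cap\T)$. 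Therefore $h=(\log\eta)\,\omega_B\ge\log\eta$ on $\D$, because $\log\eta<0$ and $\omega_B\le 1$. On the other hand, $u(\Omega_0)\subset\eta\,\D$ gives $h<\log\eta$ strictly on $\D$, a direct contradiction, so $\inf_{\Omega_0}|u|=0$.
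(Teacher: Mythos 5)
For part (1) your second route is the paper's route, but you stop exactly where the paper does the work, and your diagnosis of the difficulty is mistaken. You declare that the decisive step --- $|u|\le\eta$ on the filled hull $\tilde\Omega_0=\C\setminus K_\infty$ --- is ``the main obstacle'' because $\partial\tilde\Omega_0$ may touch $\T$, and that a supplementary argument tailored to inner functions is required. Neither is true. The paper applies the maximum principle not to $\tilde\Omega_0$ but to each bounded component $H$ of $\C\setminus\Omega_0$ separately: each such $H$ is a compact subset of $\D$ (closed, bounded, and disjoint from the connected unbounded set $\{|z|\ge 1\}$), and one checks that $\partial H\subseteq\partial\Omega_0$ (if a disk around a point of $\partial H$ missed $\Omega_0$, its union with $H$ would be a connected subset of the complement strictly larger than $H$, contradicting maximality of $H$). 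Hence $|u|\le\eta$ on $\partial H$, so $|u|\le\eta$ on $H$ by the ordinary maximum principle on a compact subset of $\D$; taking the union over all such $H$ gives $|u|\le\eta$ on $\tilde\Omega_0$, then $|u|<\eta$ there by the strong maximum principle on the domain $\tilde\Omega_0$, which contradicts $|u|=\eta$ on $\partial K\subseteq\partial\Omega_0\cap\D$. The unit circle never enters, and indeed the paper proves (1) for an arbitrary nonconstant holomorphic $f$ on $\D$, not just for inner functions. Your first route has its own gap: the existence of a Jordan curve in $\Omega_0$ enclosing $K$ is asserted, not proved, and is not elementary (this is precisely why the paper relegates the Jordan-curve argument to a footnote). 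As written, part (1) of your proposal is therefore incomplete.

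Part (2) is a genuinely different and essentially viable argument. The paper instead observes that $E:=\overline{\Omega_0}\cap\T$ has linear measure zero (Berman), treats $1/|u|$ as a bounded subharmonic function on $\Omega_0$ with $\limsup\le\eta^{-1}$ at every boundary point off $E$, and invokes a Phragm\'en--Lindel\"of maximum principle with exceptional set to conclude $|u|\ge\eta$ on $\Omega_0$, a contradiction; the case $\overline{\Omega_0}\subseteq\D$ is handled first by the minimum principle. Your Riemann-map argument avoids Berman's measure-zero theorem altogether (the set $A$ may be large; its contribution $0>\log\eta$ only helps you), but the one step you label ``classical'' is false as stated: conformal boundary correspondence does not in general preserve Lebesgue-null sets. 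What you actually need, and what is true here, is that a set $F\subseteq\partial\Omega_0\cap\T$ of Lebesgue measure zero has harmonic measure zero in $\Omega_0$; this follows from domain monotonicity of harmonic measure, since $\Omega_0\subseteq\D$ and $F$ lies on the common boundary, so $\omega(z,F,\Omega_0)\le\omega(z,F,\D)=0$. Applying this to the null set of $\zeta\in\T$ at which $u$ fails to have a unimodular nontangential limit, and combining with Lindel\"of's theorem along the curves $r\mapsto\phi(r\zeta)$ as you indicate, your proof of (2) goes through.
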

\begin{proof}

 We show that (1) holds for every holomorphic function $f$ in $\D$; that is if $\Omega_0$ is a component of the level set $\Omega_f(\eta)$, $\eta>0$, then it is a simply connected domain
  \footnote{ This proof, as well as two different ones, including the one mentioned in  footnote 1, stem from
 the forthcoming book  manuscript \cite{moru} of the second author together with R. Rupp.}.
Note that each component $\Omega_0$ of the open set $\Omega_f(\eta)$
 is  an   open subset of $\D$. We may assume that $\eta$ is chosen so  that  
 $\{z\in\D: |f(z)|=\eta\}\not=\emp$.

Suppose, to the contrary, that $D$ is a  bounded component of $\C\setminus \Omega_0$.
Note that $D$ is closed in $\C$.  Then, necessarily, $D$ is contained  in $\D$, because the unique
unbounded complementary component of $\Omega_0$ contains $\{z\in \C: |z|\geq 1\}$.
Hence $D$ is a compact subset of $\D$.  Let $G:=\Omega_0^*$ be    the simply-connected hull  of $\Omega_0$; that is the union of $\Omega_0$
 with all bounded complementary components of $\Omega_0$. Note that $G$ is open because it coincides with the complement of the 
 unique unbounded complementary component of $\Omega_0$.
 Then, by definition of the simply connected hull,  $D\ss G$.  Now if $H$ is  any  bounded complementary 
 component  of $\Omega_0$ then (as it was the case for $D$) $H$ is a compact subset of $\D$ and so $\partial H\ss \D$. Moreover, 
 \begin{equation}\label{hcompo}
 \partial H\ss\partial\Omega_0.
 \end{equation}  
 In fact, 
  given $z_0\in\partial H$, let $U$ be a disk centered at $z_0$. Then $U\inter \Omega_0\not=\emp$,
since otherwise $U\union H$ would be a connected set strictly bigger than $H$ and contained in 
the complement of $\Omega_0$; a contradiction to the maximality of $H$. 
Since $z_0\in \partial H\ss H\ss \C\setminus \Omega_0$,
we conclude that $z_0\in \partial \Omega_0$.
  
  Now  $\partial H\ss\partial\Omega_0$ and $\Omega_0\ss \Omega_f(\eta)$ imply that 
    $|f|\leq \eta$ on $\partial H$, and so, by the maximum principle,
  $|f|\leq \eta$ on $H$. Consequently, 
 $|f|\leq \eta$ on $G$.
 By the local maximum principle, $|f|<\eta$ on $G$. 
  Since  $\partial D\ss D\ss G$,
 \begin{equation}\label{klein-auf-rand}
\mbox{$  |f|<\eta$ on $\partial D$}.
\end{equation}
On the other hand,
\begin{equation}\label{rand-zu-rand}
\partial D\buildrel\ss_{}^{\reff{hcompo}} \partial \Omega_0\inter \D\ss \{z\in \D: |f(z)|=\eta\}.
\end{equation}
Note that the second inclusion follows from the fact that  if $|f(z_0)|<\eta$ for $z_0\in \partial \Omega_0\inter \D$, then $\Omega_0$ would no longer be   a maximal connected subset of $\Omega_f(\eta)$.
Hence  $|f|=\eta$ on $\partial D$.  
This is a contradiction to \reff{klein-auf-rand}. Thus $\Omega_0$ is a simply connected domain.

(2) If $\ov\Omega_0\ss \D$, then, due to $\partial \Omega_0\ss \{z\in \D: |u(z)|=\eta\}$,
we obtain from the minimum principle that $u$ must have a zero in $\Omega_0$. 
Now let  $E:=\ov \Omega_0\inter \partial\D\not=\emp$. In view of achieving a contradiction, 
suppose that $u$ is bounded away from zero in $\Omega_0$.
Then $1/|u|$ is subharmonic and bounded  in $\Omega_0$ and  
$$\limsup_{\xi\to x\atop x\in \partial \Omega_0\setminus E}|u(\xi)|^{-1}=\eta^{-1}.$$
Since $u$ is an inner function, $E$ has linear measure zero (by \cite[Theorem 4.2]{ber}).
The maximum principle for subharmonic functions with  few exceptional points (here on the  set $E$; see \cite{be-co} or  \cite{gar}), 
  now implies that 
$|u|^{-1}\leq \eta^{-1}$ on $\Omega_0$. But $|u|<\eta$ on  $\Omega$ is a contradiction.
We conclude that $\inf_{\Omega_0}|u|=0$.

\end{proof}

\begin{lemma}\cite{coh}\label{com-bi}
Let $u$ be an inner function.
Then the connectedness of $\Omega_u(\eta)$ implies the one of $\Omega_u(\eta')$
for every $\eta'>\eta$.
\end{lemma}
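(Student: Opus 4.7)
The plan is to reduce the entire statement to Lemma~\ref{compos1}(2), which is really the only nontrivial ingredient. I would fix $\eta'>\eta$, set $\Omega:=\Omega_u(\eta)$ and $\Omega':=\Omega_u(\eta')$, and exploit the obvious inclusion $\Omega\ss\Omega'$. Since $\Omega$ is assumed connected, it is contained in a single connected component $W$ of $\Omega'$. Proving the lemma then amounts to ruling out the existence of any other component of $\Omega'$.

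Next I would suppose, for a contradiction, that some component $V$ of $\Omega'$ is distinct from $W$. Because distinct connected components are disjoint, $V\inter W=\emp$, and in particular $V\inter\Omega=\emp$. This forces $|u(z)|\geq\eta>0$ for every $z\in V$. On the other hand, applying Lemma~\ref{compos1}(2) to the inner function $u$, the level set $\Omega'$, and its component $V$, yields $\inf_V|u|=0$. These two facts are incompatible, so no such $V$ exists, and $\Omega'=W$ is connected.

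The main ``obstacle'' is really only to check that the degenerate case $\Omega=\emp$ does not cause trouble, since then one cannot pin $\Omega$ to a specific component $W$. The easiest way to dispose of this is to note that any non-constant inner function satisfies $\inf_{\D}|u|=0$ (else $1/u\in\H$ is unimodular on $\T$, forcing $u$ to be a unimodular constant), whence $\Omega_u(\eta)\neq\emp$ for every $\eta>0$. Alternatively one argues directly: by Lemma~\ref{compos1}(2) every component of $\Omega'$ meets $\Omega$, so if $\Omega=\emp$ then $\Omega'=\emp$, which is trivially connected. No further complications are expected.
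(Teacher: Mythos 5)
Your proof is correct and follows essentially the same route as the paper: place the connected set $\Omega_u(\eta)$ inside one component of $\Omega_u(\eta')$, then rule out any second component by combining $|u|\geq\eta$ there with Lemma~\ref{compos1}(2). Your extra care about the degenerate case $\Omega_u(\eta)=\emp$ (which the paper silently skips) is a harmless and welcome addition.
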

\begin{proof}
Because $\Omega_u(\eta)$ is connected and  $\Omega_u(\eta)\ss \Omega_u(\eta')$, $\Omega_u(\eta)$ is contained in a 
unique component  $U_1(\eta')$ of $\Omega_u(\eta')$.
Suppose that $U_0(\eta')$ is a second  component of $\Omega_u(\eta')$. Then $|u|\geq \eta$
on $U_0(\eta')$, because  $U_0(\eta')$ is disjoint with $U_1(\eta')$ and hence with $\Omega_u(\eta)$.
By Lemma \ref{compos1} though, $\inf_{U_0(\eta')} |u|=0$; a contradiction.
Thus $\Omega_u(\eta')$ is connected.
\end{proof}

\section{Explicit examples of one-component inner functions}\label{geo}

 Let 
 $$\rho(z,w)=\left|\frac{z-w}{1-\ov zw}\right|$$ be the pseudohyperbolic distance of $z$ to $w$ in $\D$ and 
$$D_\rho(z_0,r)=\{z\in\D: \rho(z,z_0)<r\}$$
the associated disks, $0<r<1$.  Here is a first class of examples of functions in $\mathfrak I_c$.
Although the next Proposition  must be known (in view of A.B. Aleksandrov's criterion  \cite{alex}), see \ref{alexi}
below),
we include a simple geometric proof for the reader's convenience.

\begin{proposition} \label{fbp}
Let $B$ be  a finite Blaschke product. Then $B\in \mathfrak I_c$.
\end{proposition}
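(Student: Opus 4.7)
The plan is to apply the Riemann--Hurwitz formula to $B$ restricted to each component of a cleverly chosen level set, picking $\eta$ large enough that $\Omega_B(\eta)$ absorbs every critical point of $B$. First, I would recall that a finite Blaschke product $B$ of degree $n\geq 1$ is the restriction of a degree-$n$ rational self-map of the Riemann sphere obeying the symmetry $B(1/\ov z)=1/\ov{B(z)}$; Riemann--Hurwitz on the sphere together with this symmetry forces exactly $n-1$ critical points $c_1,\ldots,c_{n-1}$ of $B$ to lie in $\D$ (counted with multiplicity). Since $|B(c_j)|<1$ for every $j$, one may fix any
$$\eta\in\left(\max_{1\leq j\leq n-1}|B(c_j)|,\, 1\right)$$
(the case $n=1$ is trivial, since then $\Omega_B(\eta)$ is a pseudohyperbolic disk).

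Let $C_1,\ldots,C_k$ be the components of $\Omega:=\Omega_B(\eta)$. By Lemma \ref{compos1}(1) each $C_i$ is simply connected, and since $B$ is continuous on $\ov\D$ with $|B|\equiv 1$ on $\partial\D$, each $C_i$ is relatively compact in $\D$ with $B(\partial C_i)\ss\{|w|=\eta\}$. Hence every restriction $B|_{C_i}\colon C_i\to \eta\D$ is a proper holomorphic map, i.e.\ a branched covering of some degree $d_i\geq 1$, and the total-degree bookkeeping gives $\sum_{i=1}^k d_i=n$.

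Applying Riemann--Hurwitz componentwise, and using $\chi(C_i)=\chi(\eta\D)=1$, I get $b_i=d_i-1$, where $b_i:=\sum_{p\in C_i}(e_p-1)$ is the total ramification of $B|_{C_i}$. Summing, $\sum_{i=1}^k b_i=n-k$. But the left-hand side is the total multiplicity of critical points of $B$ contained in $\Omega$, which by the choice of $\eta$ is the full count $n-1$. Therefore $k=1$, i.e.\ $\Omega_B(\eta)$ is connected.

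The main obstacle is not conceptual but organizational: one has to pin down the critical-point count $n-1$ for a finite Blaschke product (sphere Riemann--Hurwitz plus reflection across $\partial\D$), and justify properness of each $B|_{C_i}$ onto $\eta\D$ (continuity of $B$ up to the unit circle). With these two standard ingredients available, the componentwise Riemann--Hurwitz count is automatic, and the argument fits the \emph{simple geometric} label advertised by the authors.
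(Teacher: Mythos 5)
Your proof is correct, but it takes a genuinely different route from the paper's. The paper's argument is purely metric and elementary: it picks $\eta$ so large that the pseudohyperbolic disks $D_\rho(z_n,\eta)$ about the zeros overlap into a connected set $G$, notes $G\ss\Omega_B(\eta)$ by the Schwarz--Pick lemma, and kills any putative second component by the minimum modulus principle (such a component has compact closure in $\D$ with $|B|=\eta$ on its boundary, hence must contain a zero of $B$ --- but all zeros already lie in the component containing $G$). Your argument instead runs a degree/ramification count: with $\eta$ above every critical value attained in $\D$, each of the $k$ components is simply connected (Lemma \ref{compos1}(1)) and is mapped properly onto $\eta\D$ with some degree $d_i$, $\sum_i d_i=n$ (count preimages of $0$), and Riemann--Hurwitz gives $\sum_i(d_i-1)=n-1$, forcing $k=1$. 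What your version buys is a concrete threshold $\eta_0=\max_j|B(c_j)|$ expressed through critical values, together with the extra information that $B\colon\Omega_B(\eta)\to\eta\D$ is then proper of full degree $n$; what the paper's version buys is that its scheme extends verbatim to infinite (interpolating) Blaschke products, as in Theorem \ref{one-co}, where degree bookkeeping is unavailable. Three routine points you flagged but should actually pin down: (i) the count of exactly $n-1$ critical points in $\D$ needs $B'\neq 0$ on $\T$ (otherwise the reflection $c\mapsto 1/\ov c$ could have self-paired critical points on the circle); this is immediate from the identity \reff{deri} in the paper; (ii) the Riemann--Hurwitz step for the non-compact proper map $B|_{C_i}\colon C_i\to\eta\D$ is cleanest if you uniformize $C_i\cong\D$ and observe that a proper holomorphic self-map of the disk of degree $d_i$ is a finite Blaschke product, hence has exactly $d_i-1$ interior critical points counted with multiplicity; (iii) finiteness of $k$ should be recorded (each component contains a zero of $B$ by the minimum principle, or simply $k\leq\sum_i d_i=n$). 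None of these is a gap in substance.
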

\begin{proof}
Denote by $z_1,\dots,z_N$ the zeros of $B$, multiplicities included. 
Let $\eta\in \;]0,1[$ be chosen so close to 1 that $G:=\Union_{n=1}^N D_\rho(z_n,\eta)$ is connected
(for example by  choosing $\eta$ so that $z_j\in D_\rho(z_1,\eta)$ for all $j$). Now
$$G\ss \{z\in \D: |B(z)|<\eta\}=\Omega_B(\eta),$$
 because $z\in G$ implies that for some $n$,
$$|B(z)|=\rho(B(z), B(z_n))\leq \rho(z,z_n)<\sigma.$$
Since $G$ is  connected, there is a unique component $\Omega_1$ of $\Omega$
containing $G$. 
 In particular, $Z(B)\ss G\ss \Omega_1$.
 If, in view of achieving a contradiction,  we suppose that $\Omega:=\Omega_B(\eta)$ is not connected,  there is a  component $\Omega_0$ of $\Omega$ which is disjoint with $\Omega_1$, and so with $G$. 
In particular, 
\begin{equation}\label{consta}
\rho(z, Z(B))\geq \sigma\;\; \text{for every}\;\;
z\in \Omega_0.
\end{equation}
Since $\ov \Omega_0 \ss \ov{\Omega_B(\eta)}\ss \D$, and $|B|=\eta$ on $\partial \Omega_0$,
we deduce from the minimum  principle that  $\Omega_0$ contains   a zero of $B$; a contradiction.
\end{proof}

We now generalize this result to a class of \IBP s.
Recall that a \BP\ $b$  with zero set/sequence $\{z_n:n\in \N\}$ is said to be an
 \IBP\ if $\delta(b):=\inf (1-|z_n|^2)|b'(z_n)|>0$.
If $b$ is an \IBP\  then, for small $\e$, the  pseudohyperbolic disks 
$$
D_\rho(z_n,r)=\{z\in\D: \rho(z,z_n)<\e\}
$$
are pairwise  disjoint.   Moreover,  by Hoffman's Lemma  (see below and also \cite{kl}), for any $\eta\in ]0,1[$, 
 $b$ is bounded away from zero on 
$\{z\in\D: \rho(z,Z(b))\geq \eta\}$.

\begin{theorem}[Hoffman's Lemma]\label{lemm}
 (\cite{ho} p. 86, 106 and \cite{ga} p. 404, 310).
Let $\delta, \eta$ and $\epsilon$ be real numbers, called Hoffman constants, 
satisfying $0 <~\delta <~1$, $ 0 < \eta < (1-\sqrt {1-\delta ^2})/ \delta $,
(that is, $0 < \eta < \rho(\delta,\eta)$) and

$$0 < \e < \eta \frac{\delta - \eta}{1- \delta \eta}.$$

If $B$ is any \IBP\ with zeros $\{z_n:n\in \N\}$ such that

$$\delta(B) = \inf_{n \in \Bbb N} (1- | z_n |^2) | B'(z_n)| \geq \delta,$$

then
\begin{enumerate}

\item[1)] the pseudohyperbolic disks $D_\rho(a,\eta)$
 for $a\in Z(B)$ are pairwise disjoint.
\item[(2)] The following inclusions hold:
$$\{z \in \D:  |B(z)| < \e \} \ss \{z \in \D: \rho(z, Z(B)) < \eta\} \ss \{z \in \D:  |B(z)| < \eta \}.$$

\end{enumerate}

\end{theorem}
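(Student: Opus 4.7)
For part (1), I would argue by contradiction using the pseudohyperbolic triangle inequality. If two distinct disks $D_\rho(z_n, \eta)$ and $D_\rho(z_m, \eta)$ share a point $z$, then
$$
\rho(z_n, z_m) \leq \frac{\rho(z_n, z) + \rho(z, z_m)}{1 + \rho(z_n, z)\rho(z, z_m)} < \frac{2\eta}{1+\eta^2}.
$$
On the other hand, since every factor of $\prod_{k \neq n}\rho(z_n, z_k)$ is at most one, the inequality $\prod_{k \neq n}\rho(z_n, z_k) \geq \delta(B) \geq \delta$ forces $\rho(z_n, z_m) \geq \delta$. Combining yields $\delta < 2\eta/(1+\eta^2)$, i.e. $\delta\eta^2 - 2\eta + \delta < 0$, which rearranges to $\eta > (1-\sqrt{1-\delta^2})/\delta$, contradicting the hypothesis on $\eta$.

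The second inclusion in (2) is immediate: if $\rho(z, z_n) < \eta$ for some $n$, then $|B(z)| = \prod_m \rho(z, z_m) \leq \rho(z, z_n) < \eta$.

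For the first inclusion in (2) I would prove the contrapositive: assuming $r := \rho(z, Z(B)) \geq \eta$, derive $|B(z)| \geq \epsilon$. Pick $z_{n_0}$ realizing the distance $r$, and factor $B = \varphi_{z_{n_0}} \cdot B_1$, where $B_1$ is the Blaschke product over the remaining zeros. Then $|\varphi_{z_{n_0}}(z)| = r$ and $|B_1(z_{n_0})| = \prod_{n \neq n_0} \rho(z_{n_0}, z_n) \geq \delta(B) \geq \delta$. The Schwarz--Pick inequality applied to $B_1 : \D \to \D$ places $B_1(z)$ in the pseudohyperbolic disk $D_\rho(B_1(z_{n_0}), r)$; when $r < \delta$ this disk avoids the origin, and its point of least modulus has modulus at least $(\delta - r)/(1 - \delta r)$ (since $(\mu - r)/(1 - \mu r)$ is increasing in $\mu \geq \delta$). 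Hence $|B(z)| \geq r(\delta - r)/(1 - \delta r) =: g(r)$. A short computation shows $g$ is increasing on $[0, r_-]$ and decreasing on $[r_-, \delta]$ with $r_- := (1 - \sqrt{1-\delta^2})/\delta$, and satisfies $g(\eta) = g(\rho(\delta, \eta))$ with $\rho(\delta, \eta) > r_-$; consequently $g(r) \geq g(\eta) = \eta(\delta - \eta)/(1 - \delta \eta) > \epsilon$ throughout the interval $r \in [\eta, \rho(\delta, \eta)]$.

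The main obstacle is the range $r > \rho(\delta, \eta)$, where this Schwarz--Pick estimate degenerates and, for $r \geq \delta$, fails outright. To close this gap I would invoke the Carleson-measure property of interpolating Blaschke products with $\delta(B) \geq \delta$: testing Carleson embedding against the normalized reproducing kernel $(1-|z|^2)^{1/2}/(1 - \bar z w)$ gives $\sum_n (1 - \rho(z, z_n)^2) \leq C(\delta)$ uniformly in $z$. Combined with the elementary estimate $\log \rho \geq -(1 - \rho^2)/(2\rho^2)$ valid for $\rho \in (0,1]$, this produces $|B(z)| \geq \exp(-C(\delta)/(2r^2))$ in the remaining regime; choosing $\epsilon$ below both thresholds completes the proof.
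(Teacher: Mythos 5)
The paper itself gives no proof of this theorem --- it is quoted from Hoffman and Garnett --- so your attempt has to stand on its own. Your proof of part (1) (triangle inequality for $\rho$ plus $\rho(z_n,z_m)\geq\delta$, leading to $\delta\eta^2-2\eta+\delta<0$) and of the second inclusion in (2) are correct. Your analysis of the first inclusion is also correct as far as it goes: the Schwarz--Pick bound $|B(z)|\geq g(r):=r(\delta-r)/(1-\delta r)$ at a nearest zero, the identity $g(\eta)=g(\rho(\delta,\eta))$ via the involution $t\mapsto(\delta-t)/(1-\delta t)$, and the unimodality of $g$ are all right, and they do settle the range $\eta\leq r\leq\rho(\delta,\eta)$. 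The genuine gap is in the regime $r>\rho(\delta,\eta)$: there you only obtain $|B(z)|\geq\exp(-C(\delta)/(2r^2))$ and then ``choose $\e$ below both thresholds.'' That proves a strictly weaker statement than the one asserted, namely the inclusion for $\e$ below an additional uncontrolled constant rather than for every $\e<\eta(\delta-\eta)/(1-\delta\eta)$. The loss is real: since $\sup_z\sum_n(1-\rho(z,z_n)^2)\geq 1$ (test at $z=z_1$), your second threshold never exceeds $e^{-1/2}$, while $\eta(\delta-\eta)/(1-\delta\eta)$ tends to $1$ as $\delta\to 1$ and $\eta\to(1-\sqrt{1-\delta^2})/\delta$; and the paper actually exploits this full range in Corollary \ref{thin}, where $\e$ is taken arbitrarily close to $1$.

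The gap can be closed with exactly the ingredients you already have, plus a connectedness argument. On the closed set $W=\{z\in\D:\eta\leq\rho(z,Z(B))\leq\rho(\delta,\eta)\}$ your Schwarz--Pick estimate gives $|B|\geq g(\eta)>\e$ (note $\rho(\delta,\eta)<\delta$, so the estimate is valid throughout $W$). Hence no component $\Omega_0$ of $\{|B|<\e\}$ meets $W$, and, being connected, $\Omega_0$ lies entirely in the open set $\{\rho(\cdot,Z(B))<\eta\}$ or entirely in the open set $\{\rho(\cdot,Z(B))>\rho(\delta,\eta)\}$. The second alternative is impossible: there $B$ is zero-free and bounded below by your Carleson-measure estimate (any positive lower bound suffices for this step), whereas Lemma \ref{compos1}(2) forces $\inf_{\Omega_0}|B|=0$. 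So every component of $\{|B|<\e\}$ sits inside $\{\rho(\cdot,Z(B))<\eta\}$, which is the first inclusion with the sharp constant. In short: your far-field estimate should be used only to exclude the far-field case, not to shrink $\e$.
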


  A large class of  \IBP s  which are one-component inner functions  now is given
in the following result.

\begin{theorem}\label{one-co}
Let $b$ be an \IBP\ with zero set $\{z_n:n\in \N\}$. Suppose that for some $\sigma\in \;]0,1[$ the set
$$G:=\Union_n D_\rho(z_n,\sigma)$$
is connected.
 Then $b$ is a one-component inner function. This holds in particular, if
 $\rho(z_n,z_{n+1})<\sigma<1$ for all $n$; for example if $z_n=1-2^{-n}$.

\end{theorem}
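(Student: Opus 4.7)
The plan is to mimic the proof of Proposition \ref{fbp} for the finite case, with one structural change. For an infinite \IBP\ the closure of a component of a level set can touch $\partial \D$, so the minimum principle that closed the finite case must be replaced. The substitute will be Lemma \ref{compos1}(2) together with Hoffman's Lemma (Theorem \ref{lemm}), which localizes the set where $|b|$ is small to a neighborhood of $Z(b)$.

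Put $\Omega:=\Omega_b(\sigma)$. As in the finite case, Schwarz--Pick gives $|b(z)|=\rho(b(z),b(z_n))\le \rho(z,z_n)<\sigma$ for $z\in D_\rho(z_n,\sigma)$, hence $G\ss \Omega$. Since $G$ is connected and contains $Z(b)$, there is a unique component $\Omega_1$ of $\Omega$ with $G\ss \Omega_1$.

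The main step is to rule out any other component. Given $\delta:=\delta(b)>0$, I would choose Hoffman constants $\eta_H,\e_H$ satisfying the hypotheses of Theorem \ref{lemm} with the extra constraint $\eta_H<\sigma$; then
$$\{z\in\D:|b(z)|<\e_H\}\ss \Union_n D_\rho(z_n,\eta_H)\ss G\ss \Omega_1.$$
If $\Omega_0$ is a second component of $\Omega$ disjoint from $\Omega_1$, Lemma \ref{compos1}(2) supplies a point $w\in \Omega_0$ with $|b(w)|<\e_H$; by the display $w\in \Omega_1$, contradicting $\Omega_0\inter\Omega_1=\emp$. Hence $\Omega$ is connected. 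For the ``in particular'' statement, $\rho(z_n,z_{n+1})<\sigma$ forces $z_{n+1}\in D_\rho(z_n,\sigma)\inter D_\rho(z_{n+1},\sigma)$, so consecutive disks in $G$ overlap and $G$ is connected; for $z_n=1-2^{-n}$ a short computation yields $\rho(z_n,z_{n+1})=1/(3-2^{-n})$, which stays bounded away from $1$.

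The anticipated main obstacle is exactly this replacement of the minimum-principle step: Lemma \ref{compos1}(2) only asserts $\inf_{\Omega_0}|b|=0$, which is strictly weaker than ``$\Omega_0$ contains a zero'' once $\ov{\Omega_0}$ is allowed to touch $\partial \D$. Hoffman's Lemma is what bridges the gap, by forcing every point where $|b|$ is small to lie inside $G$, and hence inside $\Omega_1$.
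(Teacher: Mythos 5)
Your proposal is correct and follows essentially the same route as the paper: both arguments contain $G$ in a single component $\Omega_1$ via Schwarz--Pick, and then kill any second component $\Omega_0$ by combining Lemma \ref{compos1}(2) (which produces a point of $\Omega_0$ where $|b|<\e$) with the first inclusion of Hoffman's Lemma (which forces that point to lie within pseudohyperbolic distance $\eta<\sigma$ of $Z(b)$, hence in $G\ss\Omega_1$). The only (immaterial) discrepancy is in the closing computation, where your value $\rho(z_n,z_{n+1})=1/(3-2^{-n})$ is in fact the correct evaluation of $\rho(1-a,1-b)=|a-b|/(a+b-ab)$, while the paper prints $1/(3+2^{-n})$; both are bounded away from $1$, so the conclusion is unaffected.
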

\begin{proof}
As in the proof of Proposition \ref{fbp}
 $$G\ss \{z\in \D: |b(z)|<\sigma\}=:\Omega.$$

Since $G$ is assumed to be connected, there is a unique component $\Omega_1$ of $\Omega$
containing $G$. 
 In particular, $Z(b)\ss G\ss \Omega_1$.
Now, if we suppose that $\Omega$ is not connected, then there is a  component $\Omega_0$
of $\Omega$ which is disjoint with $\Omega_1$, and so with $G$. 
In particular, 
\begin{equation}\label{consti}
\rho(z, Z(b))\geq \sigma\;\; \text{for every}\;\;
z\in \Omega_0.
\end{equation}
 Let  $\delta:=\delta(b)$,
$$ 0 < \eta < \min\{(1-\sqrt {1-\delta ^2})/ \delta, \sigma\},$$ 

$$0 < \e < \eta \frac{\delta - \eta}{1- \delta \eta}.$$
 
By Lemma \ref{compos1}, $\inf_{\Omega_0}|b|=0$. Thus, there is 
 $z_0\in \Omega_0$ be so that  $|b(z_0)|<\e$. We deduce from Hoffman's
 Lemma \ref{lemm} that  $\rho(z_0,Z(b))<\eta<\sigma$.
This is  a contradiction to \reff{consti}.
We conclude that $\Omega$ must be connected.  It is clear that the condition $\rho(z_n,z_{n+1})<\sigma$
for every $n$ implies that $\Union_n D_\rho(z_n,\sigma)$ is connected. For the rest, just note that
$$\rho(1-2^{-n}, 1-2^{-n-1})=\frac{2^{-n}-2^{-n-1}}{2^{-n}+2^{-n-1}+2^{-n}2^{-n-1}}=\frac{1}{3 +2^{-n}}.
$$
\end{proof}

\begin{corollary}\label{radi}
Let $B$ be a Blaschke product with increasing real zeros $x_n$ satisfying 
$$0<\eta_1\leq\rho(x_n, x_{n+1})\leq \eta_2<1.$$
Then $b\in \mathfrak I_c$.
\end{corollary}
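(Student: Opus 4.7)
The natural strategy is to invoke Theorem \ref{one-co}, which requires two hypotheses: that $B$ is an \IBP\ and that $G := \Union_n D_\rho(x_n,\sigma)$ is connected for some $\sigma \in \;]0,1[\,$.

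The connectedness of $G$ is immediate. Any $\sigma \in \;]\eta_2,1[\,$ works, because the hypothesis $\rho(x_n,x_{n+1}) \leq \eta_2 < \sigma$ forces $x_{n+1} \in D_\rho(x_n,\sigma) \cap D_\rho(x_{n+1},\sigma)$, so consecutive disks overlap and their union is connected.

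The substantive step is showing that $B$ is an \IBP. I would proceed via the Shapiro--Shields characterization: a \BP\ is interpolating iff its zero sequence is pseudohyperbolically separated and $\mu := \sum_n (1-|x_n|^2)\delta_{x_n}$ is a Carleson measure. Separation essentially comes for free from the real-axis geometry: for $n<m$, the automorphism $\phi_{x_n}(z)=(z-x_n)/(1-x_n z)$ preserves $\;]-1,1[\,$ and has positive derivative there, so it sends $x_n < x_{n+1} < \cdots < x_m$ to an increasing sequence in $[0,1)$. In particular $\rho(x_n,x_m) \geq \rho(x_n,x_{n+1}) \geq \eta_1$, so $\inf_{n\neq m}\rho(x_n,x_m) \geq \eta_1$. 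For the Carleson condition, set $r_n := 1-x_n$. The explicit formula $\rho(x_n,x_{n+1}) = (r_n-r_{n+1})/(r_n+r_{n+1}-r_n r_{n+1})$ together with $r_n<1$ turns the one-step bound $\rho(x_n,x_{n+1}) \geq \eta_1$ into a geometric-decay estimate $r_{n+1} \leq q\, r_n$ with $q=q(\eta_1) < 1$. Given any arc $I \subset \T$ of length $h$ containing the point $1$, the indices with $x_n$ inside the Carleson box over $I$ form a tail $\{n \geq n_0\}$ with $r_{n_0} < h$, and summing a geometric series yields $\mu(Q_I) \leq Ch$; arcs not containing $1$ contribute nothing because every $x_n$ is real.

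The main obstacle is the Carleson step, namely converting the purely local inequality $\rho(x_n,x_{n+1}) \geq \eta_1$ into a global measure-theoretic bound via geometric decay of $r_n$. Once $B$ is known to be an \IBP, Theorem \ref{one-co} applies verbatim and gives $B \in \mathfrak{I}_c$.
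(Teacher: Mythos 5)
Your argument is correct and follows the same skeleton as the paper: both reduce the corollary to Theorem \ref{one-co}, whose two hypotheses are the connectedness of $G=\bigcup_n D_\rho(x_n,\sigma)$ (immediate for any $\sigma\in\;]\eta_2,1[$ from the chain condition, exactly as you say) and the fact that $B$ is an \IBP. The two proofs diverge only on the second point. The paper disposes of it in one line by citing the Vinogradov--Hayman--Newman theorem, which states precisely that an increasing radial sequence is interpolating if and only if $\inf_n\rho(x_n,x_{n+1})>0$, equivalently $\sup_n(1-x_{n+1})/(1-x_n)<1$; the lower bound $\eta_1$ is therefore already the whole story. You instead reprove this special case from the Shapiro--Shields characterization: the monotonicity of $\phi_{x_n}$ on the real interval correctly yields uniform separation $\rho(x_n,x_m)\ge\rho(x_n,x_{n+1})\ge\eta_1$ for $m>n$, and the identity $\rho(1-a,1-b)=|a-b|/(a+b-ab)$ together with $a+b-ab\ge a$ (valid once the zeros are positive, hence for all but finitely many indices) converts the one-step bound into $r_{n+1}\le(1-\eta_1)\,r_n$, so the Carleson-box sum over any arc containing $1$ is a geometric series dominated by $Ch$. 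The only cosmetic imprecision is the claim that arcs not containing $1$ contribute nothing: finitely many $x_n$ may be negative, but this affects neither separation nor the Carleson bound. In short, your route is longer but self-contained --- it essentially embeds a proof of the Hayman--Newman theorem for radial sequences --- whereas the paper's proof is a two-line appeal to that theorem as a known result.
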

\begin{proof}
Just use Theorem \ref{one-co} and the fact that by the Vinogradov-Hayman-Newman theorem, $B$ is interpolating if and only if
$$\sup_n\frac{1-x_{n+1}}{1-x_n}\leq s<1$$
or equivalently
$$\inf_n\rho(x_n,x_{n+1})\geq r>0.$$
\end{proof}

Using a result of Kam-Fook Tse  \cite{tse}, telling us that a sequence $(z_n)$  of points contained in  a Stolz angle (or cone)
$\{z\in \D: |1-z|< C (1-|z|)\}$ is interpolating if and only if it is separated 
(meaning that $\inf_{n\not =m}\rho(z_n,z_m)>0$),
we obtain:

\begin{corollary}\label{kft}
Let $B$ be a Blaschke product whose zeros $(z_n)$ are contained in a Stolz angle and are separated. Suppose that
$\rho(z_n,z_{n+1})\leq \eta<1$. Then $B\in \mathfrak I_c$. 
\end{corollary}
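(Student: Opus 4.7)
The plan is to deduce the corollary directly from Theorem \ref{one-co} by verifying its two hypotheses in turn: that $B$ is interpolating, and that an appropriate union of pseudohyperbolic disks centered at the zeros is connected.

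First, I would invoke the cited result of Kam-Fook Tse. Since $(z_n)$ lies in a Stolz angle and is separated, Tse's theorem immediately tells us that $(z_n)$ is an interpolating sequence, so $B$ is an interpolating Blaschke product. This puts us in the setting of Theorem \ref{one-co}.

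Next, I would choose any $\sigma$ with $\eta<\sigma<1$ and show that
$$G:=\bigcup_n D_\rho(z_n,\sigma)$$
is connected. Indeed, the assumption $\rho(z_n,z_{n+1})\leq \eta<\sigma$ means that $z_{n+1}\in D_\rho(z_n,\sigma)$, so consecutive disks overlap (they share at least the point $z_{n+1}$, which is the center of $D_\rho(z_{n+1},\sigma)$ and an interior point of $D_\rho(z_n,\sigma)$). Hence $D_\rho(z_n,\sigma)\cup D_\rho(z_{n+1},\sigma)$ is connected for every $n$, and the union $G$ is a chain of overlapping connected sets, and therefore connected.

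Finally, with both hypotheses of Theorem \ref{one-co} in hand, I would conclude that $B\in\mathfrak I_c$. No step here looks delicate; the only thing to notice is that the "consecutive zeros close" condition $\rho(z_n,z_{n+1})\le \eta$ is exactly the chain condition built into Theorem \ref{one-co} (as highlighted at the end of its proof), so once Tse's theorem supplies interpolation the rest is automatic.
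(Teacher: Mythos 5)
Your proposal is correct and follows exactly the route the paper intends: Tse's theorem supplies that the separated sequence in a Stolz angle is interpolating, and then the chain condition $\rho(z_n,z_{n+1})\le\eta<\sigma<1$ makes $\bigcup_n D_\rho(z_n,\sigma)$ connected, so Theorem \ref{one-co} applies. The paper gives no separate proof beyond the sentence introducing Tse's result, precisely because the argument is the one you wrote out.
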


Similarily, using  a result by M. Weiss \cite[Theorem 3.6]{we} and its refinement in 
\cite[Theorem B]{bor}, we also obtain the following 
 assertion for sequences that may be tangential at 1 (see  also  Wortman \cite{wo}).

\begin{corollary}\label{we-wo}
Let $B$ be a Blaschke product  whose zeros $z_n=r_ne^{i\theta_n}$ satisfy: 
$$r_n<r_{n+1}, ~ \theta_{n+1}< \theta_n,$$
$$r_n\nearrow 1,~ \theta_n\searrow 0,$$
\begin{equation}\label{wort}
0<\eta_1\leq \rho(z_n,z_{n+1})\leq \eta_2<1.
\end{equation}
Then $B$ is an \IBP\ contained in $\mathfrak I_c$.
This holds in paticular if the zeros are located on a convex curve in $\D$ with endpoint $1$
and satisfying \reff{wort}.
\end{corollary}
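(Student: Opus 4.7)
The plan is to reduce everything to Theorem \ref{one-co}, so I need to verify two facts: that $B$ is an interpolating Blaschke product, and that the union $G:=\bigcup_n D_\rho(z_n,\sigma)$ is connected for some $\sigma\in\;]0,1[$.

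For the connectedness of $G$, pick any $\sigma$ with $\eta_2<\sigma<1$. The hypothesis $\rho(z_n,z_{n+1})\leq \eta_2<\sigma$ says precisely that $z_{n+1}\in D_\rho(z_n,\sigma)$, so the consecutive pseudohyperbolic disks $D_\rho(z_n,\sigma)$ and $D_\rho(z_{n+1},\sigma)$ overlap. Hence $G$ is a connected set, being a countable union of open connected sets forming a chain in which every two consecutive members meet. This is the same elementary observation used at the end of the proof of Theorem \ref{one-co}.

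For the interpolation property, the monotonicity hypotheses $r_n\nearrow 1$, $\theta_{n+1}<\theta_n\searrow 0$ together with the separation bound $\rho(z_n,z_{n+1})\geq \eta_1>0$ place the sequence $(z_n)$ exactly in the framework of M. Weiss \cite[Theorem 3.6]{we}, refined in \cite[Theorem B]{bor}, where such monotone, radially-and-angularly ordered, uniformly $\rho$-separated sequences converging to a boundary point are shown to be interpolating. Invoking that result we get $\delta(B)>0$, and Theorem \ref{one-co} then yields $B\in\mathfrak I_c$.

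It remains to treat the addendum about zeros on a convex curve $\gamma$ in $\D$ ending at $1$ with \reff{wort}. The point here is that convexity of $\gamma$ (as a Jordan arc landing at $1$) forces, after reversing the parametrization if necessary so that points approach $1$, the monotonicity conditions $r_n<r_{n+1}$ and $\theta_{n+1}<\theta_n$ (the curve cannot oscillate radially or angularly without violating convexity near its endpoint on $\partial\D$), while the endpoint condition gives $r_n\nearrow 1$ and $\theta_n\searrow 0$. Once these are checked, the result is covered by the first part. The only mild obstacle here is this last geometric verification: one has to argue that a convex arc ending tangentially or non-tangentially at $1$ produces monotone $r_n$ and $\theta_n$; this follows from observing that a convex curve meets each line through $1$ in a connected arc, so both the radius-to-the-origin distance and the argument seen from the origin vary monotonically along $\gamma$ near its endpoint.
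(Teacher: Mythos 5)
Your proposal is correct and follows essentially the route the paper intends: the paper gives no written proof of this corollary, but its stated justification is exactly your combination of the Weiss/Belna--Obaid--Rung interpolation theorem (to get $\delta(B)>0$) with Theorem \ref{one-co} (the chain of overlapping disks $D_\rho(z_n,\sigma)$, $\eta_2<\sigma<1$, making $G$ connected). The only divergence is the convex-curve addendum, which the paper handles by citing Wortman \cite{wo} directly, whereas you attempt a geometric reduction to the monotone case; that reduction is only sketched (the claim that convexity forces eventual monotonicity of both $r_n$ and $\theta_n$ deserves a real argument, and the strict inequality $\theta_{n+1}<\theta_n$ can degenerate, e.g.\ for radial zeros), so for that part it is cleaner to invoke \cite{wo} as the paper does.
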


Other classes of this type can be deduced from \cite{gw}.
Here are two explicit examples  of \IBP s  in $\mathfrak I_c$ whose zeros are given by iteration of the zero of  a hyperbolic, respectively parabolic automorphism of $\D$.  These functions appear, for instance, in the context of isometries on the Hardy space $H^p$ (see \cite{cw}). 

\begin{exemple}
$\bullet$~~ Let $\dis \varphi(z)=\frac{z-1/2}{1- (1/2) z}$. Then $\varphi$ is an hyperbolic automorphism
with fixed points $\pm 1$. If $\varphi_j:=\underbrace{\varphi\circ\cdots\circ \varphi}_{j-{\rm times}}$,
then $\varphi_j\in {\rm Aut}(\D)$ and vanishes exactly at the point
$$x_j:=\frac{3^j-1}{3^j+1}= 1-\frac{2}{3^j+1}.$$
This can readily be seen   by using that  $x_{j+1}=\varphi^{-1}(x_j)$ and 
$$\varphi_{j+1}(z) = (\varphi_ j\circ \varphi )(z)=\frac {z- \frac{\frac{1}{2}+x_j}
{1+ \frac{1}{2}x_j}}{1-z \frac{\frac{1}{2}+x_j}
{1+ \frac{1}{2}x_j}}.$$
Since 
$$\rho(x_j,x_{j+1})=\frac{3^{j+1}-3^j}{3^{j+1}+ 3^j}=\frac{1}{2},$$
we deduce from  Corollary \ref{radi} that the \BP\ 
$$B(z):=\prod_{j=1}^\infty \frac{x_j-z}{1-x_j z}$$
 associated with these zeros is in $\mathfrak I_c$.\\

$\bullet$ Let $\sigma\in {\rm Aut}(\D)$ and $\tau=\sigma\circ \varphi\circ \sigma^{-1}$. Then $\tau$ is also a hyperbolic automorphism
fixing the points $\sigma(\pm 1)$, and where $\xi:=\sigma(1)$ is the Denjoy-Wolff point with $\tau'(\xi)<1$.
 The zeros of the $n$-th iterate $\tau_n$ of $\tau$ are given by 
$$z_n=\tau_n^{-1}(0)=(\sigma\circ \varphi^{-1}_n\circ \sigma^{-1})(0).$$
 By the grand iteration theorem \cite[p.78]{sh},
since $1$ is an attracting fixpoint with $(\varphi^{-1})'(1)= 1/3<1$, the sequence $(\varphi^{-1}_n(\sigma^{-1}(0)))$ 
converges nontangentially to $1$.
Hence the points $z_n$ are located in  a cone with cusp at $\xi$. Moreover, if $n>k$ and  $a=\sigma^{-1}(0)$,
\begin{eqnarray*}\rho(z_n,z_k)&=& \rho\big( (\varphi^{-1}_n\circ \sigma^{-1})(0),  (\varphi^{-1}_k\circ \sigma^{-1})(0)\big)\\
&=& \rho\big(\varphi^{-1}_{n-k}(a), a\big)
\end{eqnarray*}
Thus,   $\rho(z_n,z_{n+1})=\rho (\varphi(a), a)$ for all $n$ and $\inf_{n\not=k}\rho(z_n,z_k)>0$. 
Now $(z_n)$ is a Blaschke sequence \footnote{ This also follows form the inequalities
$1-|\sigma(\xi_n)|^2= \frac{(1-|a|^2)(1-|\xi_n|^2)}{|1-\ov a \xi_n|^2}\leq \frac{1+|a|}{1-|a|} (1-|\xi_n|^2)$ and
 $1-|\psi_n(a)|^2\leq \frac{1+|a|}{1-|a|} (1-|w_n|^2)$, whenever $(w_n)$ is a Blaschke sequence and  $\psi_n(w_n)=\sigma(a)=0$.}
(\cite[Ex. 6, p. 85]{sh}); in fact, use d'Alembert's quotient criterion and  observe that  by the
Denjoy-Wolff theorem,
$$\frac{1-|z_{n+1}|}{1-|z_n|}= \frac{1-|\tau^{-1}(z_n)|}{1-|z_n|}\to (\tau^{-1})'(\xi)<1.$$
 Hence, by Corollary \ref{kft},
 $(z_n)$ is an interpolating sequence (see also \cite[p.80]{cm}) and the associated \BP\  $b=\prod_{n=1}^\infty e^{i\theta_n} \tau_n$
belongs to $\mathfrak I_c$ (here $\theta_n$ is chosen so that the $n$-th Blaschke factor is positive at the origin).

 \begin{figure}[h]
   \hspace{2,2cm}
  \scalebox{.40} 
  {\includegraphics{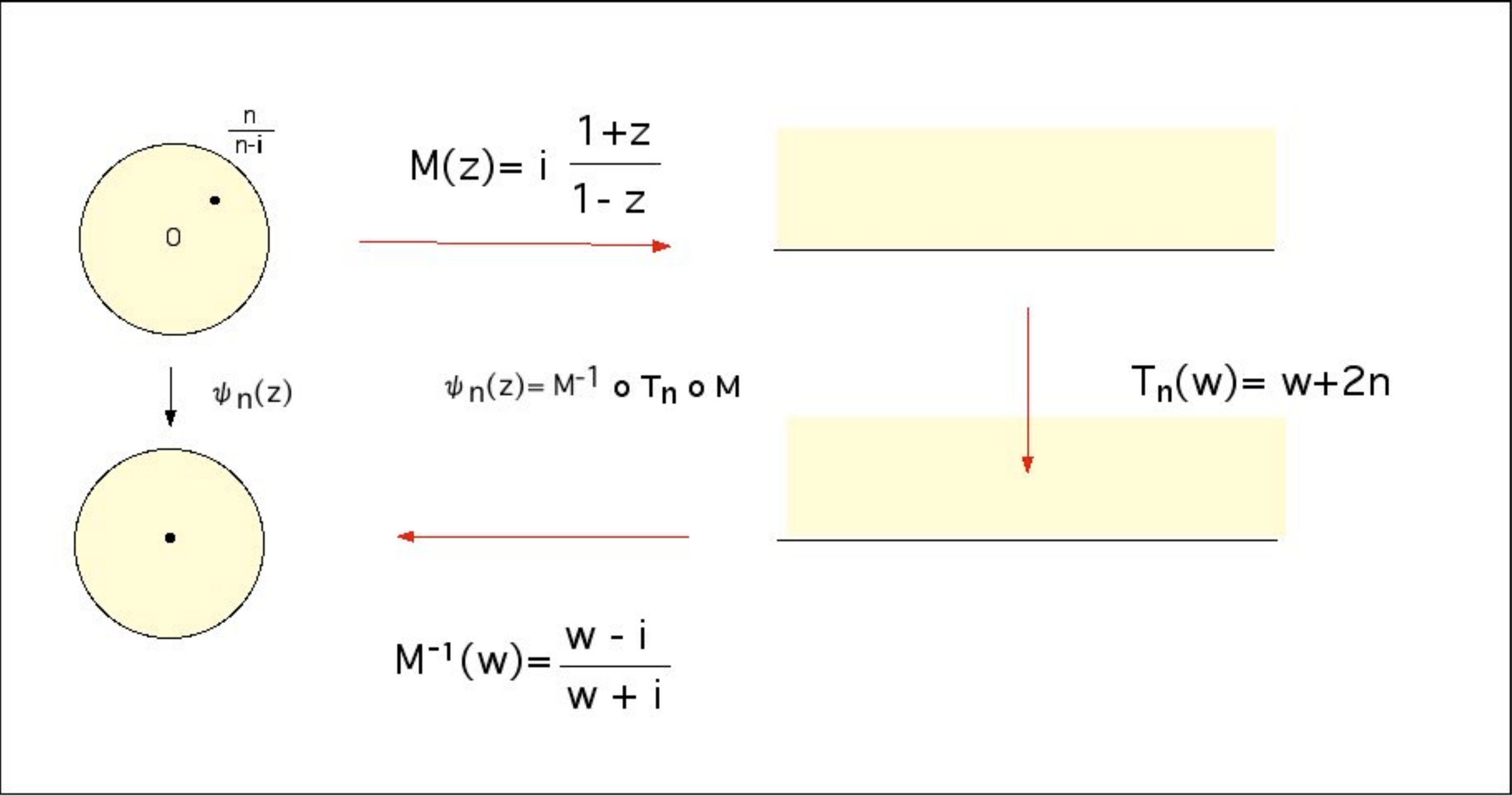}} 
\caption{\label{itera} {The parabolic automorphism}}
\end{figure}

$\bullet$~~ Let $\dis \psi(z)=i\; \frac{z- \frac{1+i}{2}}{1- \frac{1-i}{2}\; z}$. Then $\psi$ is a parabolic automorphism with attracting fixed point $1$. The automorphism $\psi$ is conjugated to the
translation $w\mapsto w+2$ on the upper half-plane (see figure \ref{itera}) via the map 
$M(z)= i (1+z)/(1-z)$ and $\psi_n=M^{-1}\circ T_n\circ M$.
The zeros of the $n$-th iterate 
$\psi_n$ of $\psi$ are given by 
$$z_n=\frac {n}{n-i};$$
just use that  $z_n=(M^{-1}\circ T_n^{-1}\circ M)(0)$.
These zeros satisfy  $\left|z_n-\frac{1}{2}\right|=\frac{1}{2}$ and of course also the Blaschke condition $\sum_{n=1}^\infty 1-|z_n|^2<\infty$.
Moreover,
$$\rho(z_n,z_{n+1})=\frac{1}{\sqrt{2}}.$$
Thus, by, Corollary \ref{we-wo},  the \BP\ associated with these zeros is  interpolating and belongs to $\mathfrak I_c$.\\
\end{exemple}

\begin{proposition}\label{ibp-mul}
Let $B$ be a finite \BP\ or an \IBP\ with real  zeros clustering at $p=1$. Then 
$f:=BS\in \mathfrak I_c$.
\end{proposition}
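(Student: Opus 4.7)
The plan is to choose $\eta\in\;]0,1[$ close enough to $1$, exhibit a connected subset $G\subseteq\Omega_{BS}(\eta)$ that captures every zero of $BS$, and then rule out any additional component via the $\inf=0$ property from Lemma~\ref{compos1}(2). This mirrors the strategy of Proposition~\ref{fbp} and Theorem~\ref{one-co}.

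The key geometric input is that $D_\eta=\Omega_S(\eta)$ is a horodisk tangent at $p=1$ whose intersection with the real axis is the interval $\bigl((L-1)/(L+1),\,1\bigr)$ with $L=\log(1/\eta)$; this interval exhausts $(-1,1)$ as $\eta\to 1$. Consequently I can take $\eta$ close enough to $1$ that every zero of $B$ lies in $D_\eta$: automatic when $B$ is a finite Blaschke product, and valid for an IBP with real zeros $(x_n)$ clustering at $1$ because only finitely many $x_n$ can fall outside any fixed $D_\eta$.

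I build $G$ separately in the two cases. If $B$ is a finite BP, I set $G=\Omega_B(\eta)\cup D_\eta$: by Proposition~\ref{fbp} together with Lemma~\ref{com-bi}, $\Omega_B(\eta)$ is connected for $\eta$ near $1$; the two pieces meet at the zeros of $B$, so $G$ is connected, and the inclusion $G\subseteq\Omega_{BS}(\eta)$ is immediate from $|BS|\leq|B|$ and $|BS|\leq|S|$. If $B$ is an IBP with $\delta=\delta(B)$, I fix Hoffman constants $\eta_H,\epsilon_H$ satisfying the hypotheses of Lemma~\ref{lemm} and also require $\eta>\eta_H$; I set $G=D_\eta\cup\bigcup_n D_\rho(x_n,\eta_H)$. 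Hoffman's second inclusion forces $D_\rho(x_n,\eta_H)\subseteq\{|B|<\eta_H\}\subseteq\Omega_{BS}(\eta)$, and each such pseudohyperbolic disk meets $D_\eta$ at $x_n$, so $G$ is again connected.

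Now let $\Omega_1$ be the component of $\Omega_{BS}(\eta)$ containing $G$ and suppose, for contradiction, that a distinct component $\Omega_0$ exists. Then $\Omega_0\cap G=\emptyset$ forces $|S|\geq\eta$ on $\Omega_0$ (since $\Omega_0$ avoids $D_\eta$), together with either $|B|\geq\eta$ (finite case, since $\Omega_0$ avoids $\Omega_B(\eta)$) or $\rho(z,Z(B))\geq\eta_H$ (IBP case, since $\Omega_0$ avoids every disk $D_\rho(x_n,\eta_H)$), whence the contrapositive of the first Hoffman inclusion gives $|B|\geq\epsilon_H$. In either case $|BS|$ stays above a positive constant on $\Omega_0$, contradicting $\inf_{\Omega_0}|BS|=0$ from Lemma~\ref{compos1}(2). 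The main obstacle lies precisely in the IBP case: without any upper bound on $\sup_n\rho(x_n,x_{n+1})$, the disks $D_\rho(x_n,\eta_H)$ alone need not form a connected set (so Corollary~\ref{radi} is unavailable), and the essential role of $S$ is to contribute one horodisk $D_\eta$, tangent at $1$, that swallows every $x_n$ at once and glues the pseudohyperbolic disks into a single connected region.
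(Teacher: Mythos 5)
Your proof is correct and follows the paper's overall template: exhibit a connected subset of $\Omega_{BS}(\eta)$ containing the relevant structure, then eliminate any putative second component via Lemma \ref{compos1}(2) together with Hoffman's Lemma. The geometric input in the interpolating case, however, is genuinely different. The paper uses only the horodisk $D_\eta$ as the connected witness, and for this it needs the fact (quoted from \cite{mru}) that the full pseudohyperbolic neighbourhood $R=\bigcup_n D_\rho(x_n,\e)$ of a real sequence tending to $1$ lies in a Stolz cone at $1$, hence inside $D_\eta$ for $\eta$ close to $1$; then $|B|$ and $|S|$ are simultaneously bounded below off $D_\eta$. You instead enlarge the connected set to $D_\eta\cup\bigcup_n D_\rho(x_n,\eta_H)$ and only need each pseudohyperbolic disk to \emph{meet} $D_\eta$ at its centre $x_n$, which requires merely that every zero lie in a single horodisk --- automatic for real zeros clustering at $1$, since these are bounded away from $-1$. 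This bypasses the cone-containment fact entirely and would extend verbatim to any interpolating Blaschke product whose zeros (real or not) are contained in some level set $\Omega_S(\eta)$ of $S$. In the finite case the difference is cosmetic: you take $\Omega_B(\eta)\cup D_\eta$ and get the lower bound $\eta^2$ for $|BS|$ off this set, while the paper keeps $D_\eta$ alone and uses that a finite Blaschke product is bounded below off any neighbourhood of its (finitely many) zeros; both are sound.
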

\begin{proof}
i) Let $B$ be  a finite \BP.  Chose $\eta\in \;]0,1[$ so close to 1 that the disk 
$D_\eta$  in \reff{horo}, which coincides with the level set $\Omega_S(\eta)$,  contains
all zeros of $B$.  Now $D_\eta=\Omega_S(\eta)\ss \Omega_f(\eta)$. Now $\Omega_f(\eta)$ must be connected,
since otherwise there would be a component $\Omega_0$ of $\Omega_f(\eta)$ disjoint from the component $\Omega_1$ containing $D_\eta$.  But $f$ is bounded away from zero outside 
$D_\eta$; hence $f=BS$ is bounded away from zero on $\Omega_0$.  This is a contradiction to
Lemma \ref{compos1} (2). 

ii) If $B$ is an \IBP\ with zeros $(z_n)$, then, by Hoffman's Lemma \ref{lemm}, $B$ is bounded away from zero outside  $R:=\Union D_\rho(z_n,\e)$ for every $\e\in \;]0,1[$. Now, if the zeros of $B$ are 
real, and bigger than $-\sigma$ for some $\sigma\in ]0,1[$,   this set $R$
is contained in a cone with cusp at 1 and  aperture-angle strictly less than $\pi$ (see for instance \cite{mru}).
Hence $R$ is contained 
in $D_\eta$ for all $\eta$ close to $1$. Thus, as above, we can deduce that $\Omega_{BS}(\eta)$
is connected.
\end{proof}

The previous result shows, in particular, that certain non one-component inner functions
(for example a thin \BP\ with positive zeros, see Corollary \ref{thin}), can be multiplied by a one-component inner function into $\mathfrak I_c$. In particular, $uv\in \mathfrak I_c$ does not imply that $u$ and $v$ belong to $\mathfrak I_c$.
 The reciprocal, though, is true: that  is $\mathfrak I_c$ itself is stable under multiplication, as we are going to show below.

\begin{proposition}\label{prodi}
Let $u,v$ be two inner functions in $\mathfrak I_c$. Then $uv\in\mathfrak I_c$.
\end{proposition}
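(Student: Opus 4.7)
The plan is to verify Aleksandrov's criterion (quoted in the introduction) for $uv$, starting from the assumption that $u$ and $v$ satisfy it with constants $C_u$ and $C_v$ respectively. For fixed $a\in \D$, I would begin with the algebraic identity
$$1 - \overline{u(a)v(a)}\,u(z)v(z) = \bigl(1 - \overline{u(a)}u(z)\bigr) + \overline{u(a)}u(z)\bigl(1 - \overline{v(a)}v(z)\bigr),$$
divide both sides by $1 - \overline{a}z$, take absolute values, and apply the triangle inequality together with $|\overline{u(a)}u(z)|\leq 1$. Aleksandrov's bound applied to $u$ and to $v$ separately then yields
$$\sup_{z\in\D}\left|\frac{1-\overline{u(a)v(a)}\,u(z)v(z)}{1-\overline{a}z}\right| \;\leq\; \frac{C_u(1-|u(a)|^2) + C_v(1-|v(a)|^2)}{1-|a|^2}.$$

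To convert the right-hand side into the shape required by Aleksandrov's criterion applied to $uv$, I would use the elementary inequality
$$(1-|u(a)|^2) + (1-|v(a)|^2) \;\leq\; 2\bigl(1-|u(a)v(a)|^2\bigr),$$
which, with $x=|u(a)|^2$ and $y=|v(a)|^2$, reduces to $x+y\geq 2xy$ on $[0,1]^2$ and follows from $x(1-y)+y(1-x)\geq 0$. Plugging this in gives
$$\sup_{z\in\D}\left|\frac{1-\overline{u(a)v(a)}\,u(z)v(z)}{1-\overline{a}z}\right| \;\leq\; 2\max(C_u,C_v)\,\frac{1-|u(a)v(a)|^2}{1-|a|^2},$$
so $uv$ satisfies Aleksandrov's criterion and therefore lies in $\mathfrak I_c$.

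The only delicate point is spotting the correct algebraic splitting of $1 - \overline{u(a)v(a)}\,u(z)v(z)$ that isolates two resolvent-type expressions to which the two hypotheses apply independently; everything afterwards is bookkeeping. A purely geometric level-set approach based on the inclusion $\Omega_{uv}(\eta)\supseteq\Omega_u(\eta)\cup\Omega_v(\eta)$ combined with Lemma \ref{com-bi} would force one to prove that these two connected pieces actually meet inside $\Omega_{uv}(\eta)$, which looks substantially harder to control; the Aleksandrov route sidesteps this connectivity issue entirely.
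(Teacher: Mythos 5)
Your proof is correct, but it takes a genuinely different route from the paper. You verify Aleksandrov's pointwise criterion for $uv$ via the splitting
$$1-\overline{u(a)v(a)}\,u(z)v(z)=\bigl(1-\overline{u(a)}u(z)\bigr)+\overline{u(a)}u(z)\bigl(1-\overline{v(a)}v(z)\bigr),$$
the triangle inequality with $|\overline{u(a)}u(z)|\leq 1$, and the elementary bound $(1-x)+(1-y)\leq 2(1-xy)$ on $[0,1]^2$; all three steps check out, and the conclusion follows from the ``if'' direction of Aleksandrov's theorem as quoted in the introduction. The paper instead argues geometrically: by Lemma \ref{com-bi} and the fact that $\bigcup_{\lambda}\Omega_f(\lambda)=\D$, one may take $\sigma$ so close to $1$ that $\Omega_u(\sigma)$ and $\Omega_v(\sigma)$ both contain the origin, whence their union $U$ is connected and contained in $\Omega_{uv}(\sigma)$; any component of $\Omega_{uv}(\sigma)$ disjoint from $U$ would have $|u|\geq\sigma$ and $|v|\geq\sigma$ on it, contradicting Lemma \ref{compos1}(2), which forces $\inf|uv|=0$ on every component. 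So your closing remark that the level-set route would require a hard argument to make the two connected pieces meet is not accurate --- enlarging the level until both sets contain the origin settles that in one line, and the real engine of the paper's proof is Lemma \ref{compos1}(2) rather than any delicate intersection argument. What your approach buys is independence from the topological machinery of Section \ref{levi} and an explicit constant $2\max(C_u,C_v)$ for the product; what the paper's approach buys is self-containedness, relying only on the elementary lemmas already proved rather than on the nontrivial direction of Aleksandrov's characterization.
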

\begin{proof}
Let $\Omega_u(\eta)$ and $\Omega_v(\eta')$ be two connected level sets. Due to  monotonicity
(Lemma \ref{com-bi}), and the fact that $\Union_{\lambda\in [\lambda_0,1[} \Omega_f (\lambda)=\D$,
we may assume that $\sigma$ satisfies
$$\max\{\eta,\eta'\}\leq \sigma<1$$
 and is so close to 1 that $0\in \Omega_u(\sigma)\inter \Omega_v(\sigma)\not=\emp$. Hence $U:=\Omega_u(\sigma)\union \Omega_v(\sigma)$ is connected. Now
$$\Omega_u(\sigma)\union \Omega_v(\sigma)\ss\Omega_{uv}(\sigma).$$
If we suppose that $\Omega_{uv}(\sigma)$ is not connected, then there is a component $\Omega_0$
of $\Omega_{uv}(\sigma)$ which is disjoint from $U$. In particular,
$u$ and $v$ are bounded away from zero on $\Omega_0$. This contradicts Lemma \ref{compos1} (2).
Hence $\Omega_{uv}(\sigma)$ is connected and so $uv\in \mathfrak I_c$.
\end{proof}

\begin{theorem}
The set of one-component  inner functions is open inside the set of all inner functions (with respect to the uniform norm topoplogy).
\end{theorem}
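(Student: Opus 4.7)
The plan is to fix $u \in \mathfrak{I}_c$ with a chosen level $\eta_0 \in \;]0,1[$ for which $\Omega_u(\eta_0)$ is connected, and to show that the $\delta$-ball around $u$ (in $\|\cdot\|_\infty$) lies in $\mathfrak{I}_c$ for any sufficiently small $\delta$. Concretely, I will pick $\delta>0$ with $\delta<\min\{\eta_0,\,1-\eta_0\}$ and, for any inner function $v$ with $\|u-v\|_\infty<\delta$, aim to prove that the enlarged level set $\Omega_v(\eta_0+\delta)$ is connected, which puts $v$ in $\mathfrak{I}_c$.

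The first step is the basic triangle-inequality inclusion
$$\Omega_u(\eta_0)\subseteq \Omega_v(\eta_0+\delta),$$
which is immediate from $|v(z)|\leq |u(z)|+\delta$. Since $\Omega_u(\eta_0)$ is connected by assumption, it is contained in a single component $\Omega_1$ of $\Omega_v(\eta_0+\delta)$.

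The heart of the argument is to rule out a second component $\Omega_0$ of $\Omega_v(\eta_0+\delta)$. Here I invoke Lemma~\ref{compos1}(2): on any such component of a level set of the inner function $v$, one has $\inf_{\Omega_0}|v|=0$. So I can pick $z_0\in \Omega_0$ with $|v(z_0)|<\eta_0-\delta$; then $|u(z_0)|\leq |v(z_0)|+\delta<\eta_0$, so $z_0\in \Omega_u(\eta_0)\subseteq \Omega_1$. This contradicts the disjointness of the components $\Omega_0$ and $\Omega_1$, and proves $\Omega_v(\eta_0+\delta)$ is connected.

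Honestly, there is no serious obstacle once Lemma~\ref{compos1}(2) is in hand: the argument is a perturbation version of the reasoning used in Proposition~\ref{fbp} and Proposition~\ref{prodi}, where the vanishing of $|v|$ on every component is precisely what forces connectedness. The only bookkeeping to be careful about is the choice of $\delta$: one needs both $\eta_0+\delta<1$ (so $\Omega_v(\eta_0+\delta)$ is a proper sublevel set of the inner function $v$, allowing Lemma~\ref{compos1} to apply) and $\delta<\eta_0$ (so that the infimum $0$ on $\Omega_0$ actually produces a point with $|v(z_0)|<\eta_0-\delta$). Both are guaranteed by $\delta<\min\{\eta_0,\,1-\eta_0\}$, and this same $\delta$ works uniformly for every perturbation $v$, giving openness at $u$.
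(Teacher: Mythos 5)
Your proof is correct and follows essentially the same route as the paper: the same choice $\delta<\min\{\eta_0,1-\eta_0\}$, the same triangle-inequality inclusion $\Omega_u(\eta_0)\subseteq\Omega_v(\eta_0+\delta)$, and the same appeal to Lemma \ref{compos1}(2) to kill a hypothetical second component. The only cosmetic difference is that the paper phrases the contradiction as ``$|\Theta|\geq\eta-\e$ on $\Omega_0$'' via the inclusion $\Omega_\Theta(\eta-\e)\subseteq\Omega_u(\eta)$, whereas you extract a point $z_0$ with $|v(z_0)|<\eta_0-\delta$ and push it into $\Omega_1$ --- logically the same step.
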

\begin{proof}
Let $u\in \mathfrak I_c$. Then, by Lemma \ref{com-bi},    $\Omega_u(\eta)$ is connected for all  $\eta\in  [\eta_0,1[$. Choose $0<\e<\min\{\eta, 1-\eta\}$ and let 
$\Theta$ be  an inner function with $||u-\Theta||<\e$. We claim that $\Theta\in\mathfrak I_c$, too.
To this end we note that
$$\Omega_{\Theta}(\eta-\e)\ss \Omega_u(\eta)\ss \Omega_\Theta(\eta+\e),$$
where $0<\eta-\e <\eta+\e<1$. As usual,
if we suppose that $\Omega_{\Theta}(\eta+\e)$ is not connected, then there is a component $\Omega_0$
of $\Omega_{\Theta}(\eta+\e)$ which is disjoint from the connected set $\Omega_u(\eta)$,
hence disjoint with  $\Omega_{\Theta}(\eta-\e)$. In other words, 
$|\Theta|\geq \eta-\e>0$ on  $\Omega_0$. This contradicts Lemma \ref{compos1} (2).
Hence $\Omega_\Theta(\eta+\e)$ is connected and so $\Theta\in\mathfrak I_c$.
\end{proof}

Next we look at right-compositions of $S$ with finite \BP s.  We first deal with the case where 
$B(z)=z^2$. 

\begin{example}
The function $S(z^2)$ is a one-component inner function.
\end{example}
\begin{proof}

 Let $\Omega_S(\eta)$ be the $\eta$-level set of $S$. Then, as we have already seen, this is a disk tangent to the unit circle at the point 1.
 We may choose $0<\eta<1$ so close to $1$ that $0$ belongs to $\Omega_S(\eta)$. Let $U=\Omega_S(\eta)\setminus ]-\infty, 0]$.
 Then $U$ is a simply connected slitted disk on which exists a holomorphic square root  $q$ of $z$. The image of $U$
 under $q$ is a simply connected domain $V$ in the semi-disk $\{z: |z|<1, {\rm Re}\; z>0\}$. Let $ V^*$ be its reflection along the 
 imginary axis.
 Then $E:=\ov{V^*\union V }$ is mapped by $z^2$ onto the closed disk  $\ov{\Omega_S(\eta)}$. 
 Then $E\setminus \partial E$ coincides with $\Omega_{S(z^2)}(\eta)$.
 
  \begin{figure}[h]
   \hspace{2,2cm}
  \scalebox{.40} 
  {\includegraphics{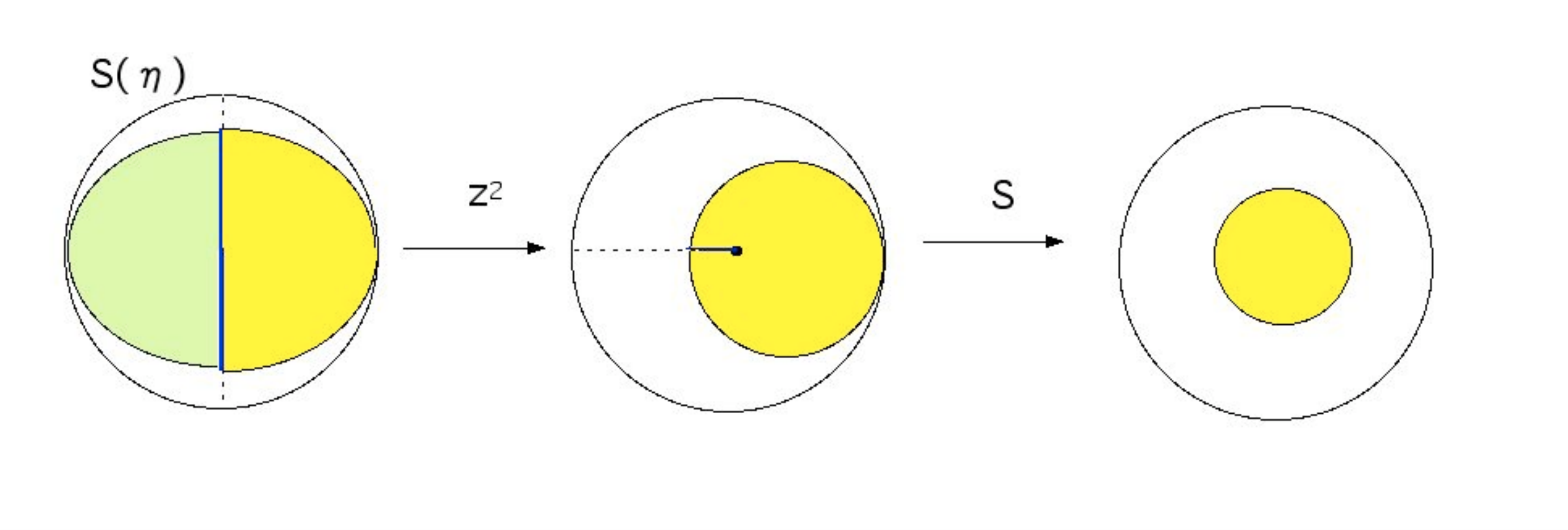}} 
\caption{\label{levels} {The level sets of $S(z^2)$}}
\end{figure}
\end{proof}

Using Aleksandrov's criterion (see below), we can extend this by replacing $z^2$ with any finite \BP.
Recall that the spectrum $\rho(\Theta)$ of an inner function  $\Theta$ is the set of all boundary points $\zeta$ for which 
$\Theta$ does not admit a holomorphic extension; or equivalently, for which  $Cl(\Theta,\zeta)=\ov\D$,
where 
$$\mbox{$Cl(\Theta,\zeta)=\{w\in\C: \exists (z_n)\in\D^\N,~ \lim z_n=\zeta$ and $ \lim \Theta(z_n)=w\}$}$$ 
is the cluster set of 
$\Theta$ at $\zeta$ (see \cite[p. 80]{ga}).

\begin{theorem}[Aleksandrov] \label{alexi}\cite[Theorem 1.11 and Remark 2, p. 2915]{alex}
Let $\Theta$ be an inner function. The following assertions are equivalent:
\begin{enumerate}
\item[(1)] $\Theta\in \mathfrak I_c$.
\item[(2)] There is a constant $C>0$ such that for every $\zeta\in \T\setminus \rho(\Theta)$ we have
$$i)~~~  |\Theta '' (\zeta)|\leq C\; |\Theta' (\zeta)|^2,$$
and 
$$ii)~~~ \mbox{$\liminf_{r\to 1} |\Theta(r\zeta)|<1$ for all $\zeta\in \rho(\Theta)$}.$$
\end{enumerate}
\end{theorem}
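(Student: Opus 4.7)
My strategy is to deduce Aleksandrov's boundary characterization from the equivalent global characterization of $\mathfrak I_c$ already stated in the Introduction, namely that $\Theta\in\mathfrak I_c$ iff
\[\sup_{z\in\D}|k_a^\Theta(z)|\leq C\,\|k_a^\Theta\|_2^2\quad\text{for all } a\in\D,\]
where $k_a^\Theta(z):=\frac{1-\overline{\Theta(a)}\Theta(z)}{1-\overline{a}z}$ and $\|k_a^\Theta\|_2^2=\frac{1-|\Theta(a)|^2}{1-|a|^2}$.

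For the direction $(1)\Rightarrow(2)$, I would take nontangential boundary limits $a\to\zeta\in\T$ in this sup estimate. When $\zeta\in\T\setminus\rho(\Theta)$, $\Theta$ extends analytically across $\zeta$ with $|\Theta(\zeta)|=1$, and by Julia--Carath\'eodory, $\|k_a^\Theta\|_2^2\to|\Theta'(\zeta)|$. A direct Taylor expansion then gives
\[k_\zeta^\Theta(z)=\zeta\overline{\Theta(\zeta)}\Theta'(\zeta)\Bigl[1+\tfrac{\Theta''(\zeta)}{2\Theta'(\zeta)}(z-\zeta)+O\bigl((z-\zeta)^2\bigr)\Bigr],\]
so $|k_\zeta^\Theta(\zeta)|=|\Theta'(\zeta)|$ while $|(k_\zeta^\Theta)'(\zeta)|=|\Theta''(\zeta)|/2$. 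The global sup bound $\|k_\zeta^\Theta\|_\infty\leq C|\Theta'(\zeta)|$ combined with a Cauchy-type estimate on an appropriate pseudohyperbolic neighborhood of $\zeta$ (whose size is $\asymp 1/|\Theta'(\zeta)|$) then forces (i). For $\zeta\in\rho(\Theta)$, I would get (ii) by contradiction: if $|\Theta(r\zeta)|\to1$ radially, testing the criterion at $a=r\zeta$ against a well-chosen $z$ forces $\|k_a^\Theta\|_\infty/\|k_a^\Theta\|_2^2$ to blow up.

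For the reverse direction $(2)\Rightarrow(1)$, the goal is to recover a connected level set from the pointwise boundary data. First, condition (ii) ensures that every $\zeta\in\rho(\Theta)$ lies in $\overline{\Omega_\Theta(\eta)}$ once $\eta$ is close enough to~$1$, so all singular boundary points are accessible through sublevel sets. Second, on arcs of $\T$ avoiding $\rho(\Theta)$, condition (i) controls the curvature of the level curves $\{|\Theta|=\eta\}$ via the differential inequality on $\Theta$ pulled back by Schwarz reflection, showing that these curves are $C^1$-close to pseudohyperbolic circles of comparable size. Gluing the two pieces of information, one shows that a single component of $\Omega_\Theta(\eta)$ reaches every portion of $\T$, giving the connectedness.

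The main obstacle, in my view, is the direction $(2)\Rightarrow(1)$: passing from pointwise boundary data to the global topological statement of connectedness requires handling the arbitrary geometry of $\rho(\Theta)\subseteq\T$. Gluing local control on regular arcs with cluster information at singular points is the delicate step, and Aleksandrov's original argument in \cite{alex} accomplishes this through a careful use of the associated Aleksandrov--Clark measures, whose disintegration gives the uniform boundary estimates that translate back into the global kernel bound.
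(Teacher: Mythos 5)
This theorem is not proved in the paper at all: it is imported verbatim from Aleksandrov's article (\cite[Theorem 1.11 and Remark 2]{alex}) and used as a black box, so there is no internal proof to compare yours against. Judged on its own terms, your text is a proof plan rather than a proof, and it has genuine gaps. The most serious one is that the entire direction $(2)\Rightarrow(1)$ -- which you yourself identify as the hard step -- is ultimately deferred to ``Aleksandrov's original argument in \cite{alex}''; since the statement being proved \emph{is} Aleksandrov's theorem, this is circular, and the intermediate claims you make there (that condition (i) forces the level curves to be ``$C^1$-close to pseudohyperbolic circles of comparable size'', and that one can ``glue'' this with the cluster information at points of $\rho(\Theta)$ to produce a single connected component) are asserted without any argument and are exactly where the difficulty lives.

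The direction $(1)\Rightarrow(2)$ also has a concrete flaw. Your Taylor expansion of $k_\zeta^\Theta$ at a regular boundary point $\zeta$ is correct, and $|k_\zeta^\Theta(\zeta)|=|\Theta'(\zeta)|$, $|(k_\zeta^\Theta)'(\zeta)|=|\Theta''(\zeta)|/2$ are the right quantities to compare. But the global bound you start from, $\sup_{z\in\D}|k_a^\Theta(z)|\leq C\,\frac{1-|\Theta(a)|^2}{1-|a|^2}$, is a supremum over $\D$ only, while $\zeta$ lies on $\partial\D$; a Cauchy estimate for $(k_\zeta^\Theta)'(\zeta)$ needs control of $k_\zeta^\Theta$ on a full disk around $\zeta$ in $\C$, which you do not have. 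Moreover, a ``pseudohyperbolic neighborhood of $\zeta$ of size $\asymp 1/|\Theta'(\zeta)|$'' is not meaningful, since $\rho(z,\zeta)=1$ for every $z\in\D$ when $|\zeta|=1$. One can repair this (e.g.\ by Schwarz reflection of $\Theta$ across an arc avoiding $\rho(\Theta)$, or by working at interior points $r\zeta$ with radii chosen comparably to $1-|\Theta(r\zeta)|^2$ and passing to the limit), but as written the step fails. Similarly, the blow-up argument for (ii) at points of $\rho(\Theta)$ is only asserted, not carried out. If you want to use this theorem in the present paper, the honest course is to cite it, as the authors do.
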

Note that, due to this theorem,  $\Theta\in \mathfrak I_c$ necessarily implies 
that $\rho(\Theta)$ has measure zero.

\begin{proposition}\label{sing}
Let $B$ be a finite \BP.  Then $S\circ B\in \mathfrak I_c$.
\end{proposition}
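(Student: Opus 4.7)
The plan is to apply Aleksandrov's criterion (Theorem \ref{alexi}) to $\Theta:=S\circ B$, exploiting the already noted fact that $S\in\mathfrak I_c$ together with the smoothness of a finite Blaschke product up to $\T$.

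First I would identify the spectrum of $\Theta$. Since $B$ is a finite Blaschke product it extends holomorphically across $\T$ and maps $\T$ onto $\T$, while $S$ is holomorphic on $\ov\D\setminus\{1\}$. Consequently $\Theta$ fails to extend holomorphically at $\zeta\in\T$ exactly when $B(\zeta)=1$, so
$$\rho(\Theta)=B^{-1}(\{1\})\cap \T,$$
which is a finite set. For (ii) of Aleksandrov's criterion, fix such a $\zeta$. Because $B$ is holomorphic and non-constant at $\zeta$ with $B(\zeta)=1$, a direct Taylor expansion gives $|1-B(r\zeta)|\asymp (1-r)$, while Julia's lemma (or the Julia--Carath\'eodory theorem applied to the finite Blaschke product) gives $1-|B(r\zeta)|\asymp (1-r)$ as $r\to 1^-$. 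Hence
$$\frac{1-|B(r\zeta)|^2}{|1-B(r\zeta)|^2}\longrightarrow \infty,$$
and since $|S(w)|=\exp\bigl(-(1-|w|^2)/|1-w|^2\bigr)$ we conclude $|\Theta(r\zeta)|\to 0$, so in particular $\liminf_{r\to 1}|\Theta(r\zeta)|<1$.

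The bulk of the work is (i). I would use the chain rule,
$$\Theta'=(S'\circ B)\,B',\qquad \Theta''=(S''\circ B)\,(B')^2+(S'\circ B)\,B'',$$
to obtain, for $\zeta\in\T\setminus\rho(\Theta)$,
$$\frac{|\Theta''(\zeta)|}{|\Theta'(\zeta)|^2}\;\le\; \frac{|S''(B(\zeta))|}{|S'(B(\zeta))|^2}\;+\;\frac{|B''(\zeta)|}{|S'(B(\zeta))|\,|B'(\zeta)|^2}.$$
Since $B(\zeta)\in\T\setminus\{1\}$ and $S\in\mathfrak I_c$ (its level sets are the horocyclic disks \reff{horo}), Aleksandrov's criterion applied to $S$ bounds the first summand by a constant depending only on $S$. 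For the second, use that $B$ is real-analytic on $\T$ so $|B''|$ is bounded, while for a finite Blaschke product $|B'(\zeta)|=\sum_k (1-|z_k|^2)/|1-\bar z_k\zeta|^2$ is bounded below by some $c>0$ on $\T$. Finally, an explicit computation shows $|S'(w)|=2/|1-w|^2\ge 1/2$ for all $w\in\T\setminus\{1\}$, so the denominator $|S'(B(\zeta))|\,|B'(\zeta)|^2$ is bounded below on $\T\setminus\rho(\Theta)$. This yields a uniform constant in (i).

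The only step that requires any care is controlling the ``mixed'' second term of (i): naively one might fear that $|S'(B(\zeta))|$ could collapse as $\zeta$ approaches $\rho(\Theta)$, but the elementary identity $|S'(w)|=2/|1-w|^2$ on $\T$ shows the opposite — $|S'|$ is in fact bounded below on $\T\setminus\{1\}$, and actually blows up near $1$, which only helps. Thus both conditions of Theorem \ref{alexi} hold and $S\circ B\in\mathfrak I_c$.
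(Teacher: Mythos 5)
Your proof is correct and follows essentially the same route as the paper: identify $\rho(S\circ B)=B^{-1}(\{1\})$, verify condition (ii) of Theorem \ref{alexi} from the behaviour of $r\mapsto B(r\zeta)$ near $1$, and bound the two chain-rule terms of $\Theta''/(\Theta')^2$ using the explicit formulas for $S'$, $S''$ and the non-vanishing of $B'$ on $\T$. The only cosmetic differences are that the paper bounds $|S''|/|S'|^2$ by direct computation (using $|S|=1$ on $\T\setminus\{1\}$) rather than by invoking Aleksandrov's criterion for $S$ itself, and it obtains (ii) via the angle-conservation law (the curve $B(r\zeta)$ stays in a Stolz angle at $1$) instead of Julia--Carath\'eodory.
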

\begin{proof}
Let us note first that $\rho(S\circ B)=B^{-1}(\{1\})$. Since the derivative of $B$ on the boundary never vanishes (due to
\begin{equation}\label{deri}
z\frac{B'(z)}{B(z)}=\sum_{n=1}^N \frac{1-|a_n|^2}{|a_n-z|^2}, ~~|z|=1, B(a_n)=0,)
\end{equation} 
$B$ is schlicht in  a neighborhood of $1$. The angle conservation law now implies that  for 
$\zeta\in B^{-1}(1)$
the curve $r\mapsto B(r\zeta)$ stays in a Stolz angle at 1 in the image space of $B$.
Hence  $\liminf_{r\to 1} S(B(r \zeta))=0$ for $\zeta\in \rho(S\circ B)$. Now let us calculate the derivatives:
$$S'(z)=-S(z) \frac{2}{(1-z)^2},$$
$$S''(z)= S(z)\left[ \frac{4}{(1-z)^4}- \frac{4}{(1-z)^3}\right],$$

$$(S\circ B)'= (S'\circ B) B'$$

$$(S\circ B)'' =(S''\circ B) B'^2 + (S'\circ B) B''$$

\begin{eqnarray}\label{compos}
A:=\frac{(S\circ B)''}{[(S\circ B)']^2}&=&\frac{S''\circ B}{(S'\circ B)^2}+ \frac{(S'\circ B)}{(S'\circ B)^2}\frac{B''}{B'^2}\\\nonumber
&=&\frac{S''\circ B}{(S'\circ B)^2}+ \frac{1}{S'\circ B}\frac{B''}{B'^2}.
\end{eqnarray}
Hence, for $\zeta\in \T\setminus \rho(S\circ B)$, $|B(\zeta)|=1$ , but $\xi:=B(\zeta)\not=1$, and so,
by \reff{deri},
\begin{eqnarray*}
|A(\zeta)|&\leq& \sup_{\xi\not=1} \frac{|S''(\xi)|}{|S'(\xi)|^2}+ 2\sup_{\xi\not=1}\frac{|1-\xi|^2}{|S(\xi)|}\; C \\
&\leq& C' \sup_{\xi\not=1} \frac{|1-\xi|^4}{|1-\xi|^4} +8C<\infty.
\end{eqnarray*}\end{proof}

\begin{corollary}
Let $S_\mu$ be a singular inner function with finite spectrum $\rho(S_\mu)$. Then $S_\mu\in \mathfrak I_c$.
\end{corollary}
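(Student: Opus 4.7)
The plan is to reduce the corollary to Proposition \ref{prodi} (multiplicative closure of $\mathfrak I_c$) by decomposing $S_\mu$ into elementary pieces. Since for a singular inner function $\rho(S_\mu)=\supp\mu$, the hypothesis says that $\mu$ is a positive measure supported on a finite set $\{\zeta_1,\dots,\zeta_N\}\ss\T$. Such a measure is purely atomic, so $\mu=\sum_{j=1}^N c_j\delta_{\zeta_j}$ with $c_j>0$. Factoring the integral in the exponent of $S_\mu$ accordingly yields
$$S_\mu(z)=\prod_{j=1}^N E_j(z),\qquad E_j(z):=\exp\!\left(-c_j\,\frac{\zeta_j+z}{\zeta_j-z}\right).$$

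Next I would check that each $E_j$ is a one-component inner function. After the rotation $w=\ov{\zeta_j}z$ the problem is reduced to the model case $\zeta_j=1$, where
$$|E_j(z)|=\exp\!\left(-c_j\,\frac{1-|z|^2}{|1-z|^2}\right),$$
so $\Omega_{E_j}(\eta)=\{z\in\D:\frac{1-|z|^2}{|1-z|^2}>c_j^{-1}\log(1/\eta)\}$. Exactly as in the computation leading to \reff{horo}, this level set is an open horodisk internally tangent to $\T$ at $\zeta_j$; in particular it is connected, so $E_j\in\mathfrak I_c$.

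Finally, applying Proposition \ref{prodi} inductively $N-1$ times to the finite product $S_\mu=\prod_{j=1}^N E_j$ gives $S_\mu\in\mathfrak I_c$, which is the claim.

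The only step that is not completely routine is the first one, namely the observation that the finiteness of $\rho(S_\mu)$ forces $\mu$ to be a finite sum of point masses, so that $S_\mu$ splits as a finite product of shifted/scaled copies of the atomic inner function $S$. Once this is in place, the horodisk shape of $\Omega_{E_j}(\eta)$ is a one-line calculation and the multiplicative closure of $\mathfrak I_c$ closes the argument.
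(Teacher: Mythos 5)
Your proof is correct, and it is a genuinely different (and more elementary) route than the one the paper displays. The paper's proof goes through the universal covering property of $S$: since $S$ is the universal covering map of $\D\setminus\{0\}$, one writes $S_\mu=S\circ v$ for some inner function $v$, argues that finiteness of $\rho(S_\mu)$ forces $v$ to be a finite \BP, and then invokes Proposition \ref{sing}, whose proof rests on Aleksandrov's criterion (Theorem \ref{alexi}) and boundary estimates of $(S\circ B)''/[(S\circ B)']^2$. You instead use the trivial fact that a positive measure supported on a finite set is a finite sum of point masses, factor $S_\mu=\prod_j E_j$ into rotated powers of the atomic inner function, verify via the horodisk computation \reff{horo} that each factor has connected level sets, and finish with the multiplicative stability of $\mathfrak I_c$ (Proposition \ref{prodi}), which rests only on the topological Lemma \ref{compos1}. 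This is precisely the alternative the authors themselves flag in the one-sentence remark immediately after their proof (``this result also follows in an elementary way from Proposition \ref{prodi} and the fact that every such $S_\mu$ is a finite product of powers of the atomic inner function $S$''), so your argument is sanctioned by the paper even though it is not the proof given. What each approach buys: yours avoids Aleksandrov's criterion entirely and stays at the level of explicit level-set geometry; the paper's composition route is heavier but exhibits $S_\mu$ as $S\circ(\text{finite \BP})$, a structural fact reused elsewhere and not visible from your factorization. All the individual steps you take --- $\rho(S_\mu)=\supp\mu$, the splitting of the exponent, $|E_j(z)|=\exp\bigl(-c_j(1-|z|^2)/|\zeta_j-z|^2\bigr)$, and the induction on Proposition \ref{prodi} --- check out.
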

\begin{proof}
Since $S$ is the universal covering map of $\D\setminus \{0\}$, each singular inner function $S_\mu$
writes as $S_\mu=S\circ v$ for some inner function $v$. Since $\rho(S_\mu)$ is finite, $v$ necessarily is
a finite \BP. (This can also be seen from \cite[Proof of Theorem 2.2]{glmr}). The assertion now follows from Proposition \ref{sing}.
\end{proof}
Note that this result also follows in an elementary way from Proposition \ref{prodi} and the fact that every such $S_\mu$ is a finite product of powers of the atomic inner function $S$.
We now consider left-compositions with finite Blaschke products.

\begin{proposition} \label{frost}
Let $\Theta$ be a one-component inner function. Then each Frostman shift 
$(a-\Theta)/(1-\ov a \Theta)\in \mathfrak I_c$, too. Here $a\in \D$.
\end{proposition}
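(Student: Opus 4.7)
The plan is to apply Aleksandrov's criterion (Theorem~\ref{alexi}) to $F:=\phi_a\circ\Theta$, where $\phi_a(z)=(a-z)/(1-\ov a z)$ is the involutive Möbius self-map of $\D$ associated to $a$. The key advantage is that $\phi_a$ extends to a homeomorphism of $\ov\D$ (its denominator is nonzero on $\ov\D$), so all derivative quantities attached to $\phi_a$ are bounded on $\ov\D$ by constants depending only on $|a|$.

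First I would check that $\rho(F)=\rho(\Theta)$. The continuity of $\phi_a$ on $\ov\D$ gives $Cl(F,\zeta)=\phi_a(Cl(\Theta,\zeta))$, and since $\phi_a$ is a bijection of $\ov\D$ this cluster set equals $\ov\D$ iff $Cl(\Theta,\zeta)=\ov\D$. Condition (ii) of Theorem~\ref{alexi} for $F$ then follows immediately from the same condition for $\Theta$: given $\zeta\in\rho(F)=\rho(\Theta)$, choose $r_n\to 1$ with $\Theta(r_n\zeta)\to w_0$ where $|w_0|<1$; then $F(r_n\zeta)=\phi_a(\Theta(r_n\zeta))\to \phi_a(w_0)\in\D$, so $\liminf_{r\to 1}|F(r\zeta)|<1$.

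For condition (i) I would use the chain-rule identity already carried out in the proof of Proposition~\ref{sing}:
$$\frac{F''}{(F')^2}=\frac{\phi_a''\circ\Theta}{(\phi_a'\circ\Theta)^2}+\frac{1}{\phi_a'\circ\Theta}\cdot\frac{\Theta''}{(\Theta')^2}.$$
From the explicit formulas $\phi_a'(z)=-(1-|a|^2)/(1-\ov a z)^2$ and $\phi_a''(z)=-2\ov a(1-|a|^2)/(1-\ov a z)^3$, one sees that $\phi_a''/(\phi_a')^2=-2\ov a(1-\ov a z)/(1-|a|^2)$ and $1/\phi_a'=-(1-\ov a z)^2/(1-|a|^2)$ are continuous and hence bounded on $\ov\D$. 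For $\zeta\in\T\setminus\rho(F)=\T\setminus\rho(\Theta)$ the function $\Theta$ extends analytically across $\zeta$ with $|\Theta(\zeta)|=1$, so these bounded factors can be evaluated at $\Theta(\zeta)$; combined with the hypothesis $|\Theta''(\zeta)|\leq C|\Theta'(\zeta)|^2$ valid on $\T\setminus\rho(\Theta)$, this yields an analogous bound $|F''(\zeta)|\leq C'|F'(\zeta)|^2$ with $C'$ depending only on $C$ and $|a|$.

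I do not anticipate any genuine obstacle: the entire argument is mechanical once one notes the stability of $\rho$ under $\phi_a$, which is what lets us match the boundary sets on which (i) and (ii) are verified. The only step demanding minor care is the bookkeeping of bounds for the Möbius quantities in terms of $|a|$. A more direct alternative, if desired, would be to observe that $\Omega_F(\eta)=\Theta^{-1}(D_\rho(a,\eta))$ contains $\Omega_\Theta(\eta')$ for any $\eta'\in(|a|,1)$ with $D_\rho(a,\eta)\supset\{|w|<\eta'\}$, and then rule out extra components using Lemma~\ref{compos1}(2); but the Aleksandrov approach fits more naturally after Proposition~\ref{sing}.
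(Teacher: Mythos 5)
Your proposal is correct and follows essentially the same route as the paper: both verify Aleksandrov's criterion for $\tau\circ\Theta$ by noting $\rho(\tau\circ\Theta)=\rho(\Theta)$, checking the radial $\liminf$ condition via continuity of the M\"obius map, and bounding $(\tau\circ\Theta)''/[(\tau\circ\Theta)']^2$ through the chain-rule identity of \reff{compos} together with the boundedness of the M\"obius derivative quantities on $\ov\D$. Your explicit computation of $\tau''/(\tau')^2$ and $1/\tau'$ is a slightly more transparent bookkeeping of the constants than the paper's, but the argument is the same.
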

\begin{proof}
Let $\tau(z)=(a-z)/(1-\ov az)$.  Then $\rho(\tau\circ \Theta)=\rho(\Theta)$.
As above, $$\liminf_{r\to 1}|\tau\circ \Theta(r\zeta)|<1$$
 for every $\zeta\in \rho(\tau\circ\Theta)$.
Now
$$\tau(z)= \frac{1}{\ov a}+ \frac{|a|^2-1}{\ov a} \frac{1}{1-\ov a z},$$
from which we easily deduce the first and second derivatives.  By using  the  formulas \ref{compos},
we obtain
\begin{eqnarray*}
A:=\left|\frac{(\tau\circ \Theta)''}{[(\tau\circ \Theta)']^2}\right|&\leq &C\frac{|1-\ov a \Theta|^4}
{|1-\ov a\Theta|^3}+ C' |1-\ov a \Theta|^2 \; \frac{|\Theta''|}{|\Theta'|^2}.
\end{eqnarray*}
Hence, the assumption $\Theta\in \mathfrak I_c$ now yields (via Aleksandrov's criterion \ref{alexi})
that $\sup_{\zeta\in \rho(\tau\circ \Theta)} A(\zeta)<\infty$. Thus $\tau\circ\Theta\in \mathfrak I_c$.

\end{proof}
\begin{corollary}
Given $a\in\D\setminus \{0\}$, the interpolating Blaschke products\\ $(S-a)/(1-\ov a S)$ belong to $\mathfrak I_c$.
\end{corollary}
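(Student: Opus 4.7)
The inclusion $(S-a)/(1-\ov a S)\in\mathfrak I_c$ is immediate from Proposition~\ref{frost} applied to $\Theta=S$; recall from the introduction that every level set $\Omega_S(\eta)$ is the horocycle $D_\eta$ of \reff{horo}, so $S\in\mathfrak I_c$. The real work, then, is to show that $B_a(z):=(S(z)-a)/(1-\ov a S(z))$ is an \IBP.

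To locate the zeros, I would solve $S(z)=a$ directly. Fixing a branch of the logarithm and setting $c_k:=\log a+2\pi i k$ for $k\in\Z$, the equation $-(1+z)/(1-z)=c_k$ gives $z_k=\phi(c_k)$, where $\phi(c):=(c+1)/(c-1)$ is the conformal map from the left half-plane $\Pi_-=\{{\rm Re}\,c<0\}$ onto $\D$. A direct computation yields
$$1-|z_k|^2=\frac{-4\log|a|}{|c_k-1|^2}=O(k^{-2}),$$
so the $(z_k)$ form a Blaschke sequence. That $B_a$ really is the associated Blaschke product (with no surviving singular factor) follows from two observations: the spectrum $\rho(B_a)$ is contained in $\rho(S)=\{1\}$, so any singular factor would have to be a power of the atomic function $S$; but along the real radius one has $S(r)\to 0$ and hence $B_a(r)\to -a\ne 0$ as $r\to 1^-$, giving $(1-r)\log|B_a(r)|^{-1}\to 0$, which rules out any singular mass at $1$.

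Next I would verify that $(z_k)$ is interpolating by working in $\Pi_-$, where $\phi$ is an isometry for the hyperbolic metrics. Using the formula $\rho_{\Pi_-}(c,c')=|c-c'|/|\ov c+c'|$ together with ${\rm Re}\,c_k=\log|a|$ and ${\rm Im}(c_j-c_k)=2\pi(j-k)$, one obtains the closed expression
$$\rho(z_j,z_k)=\frac{\pi|j-k|}{\sqrt{(\log|a|)^2+\pi^2(j-k)^2}}.$$
Separation is immediate, since consecutive zeros sit at the positive constant distance $\pi/\sqrt{(\log|a|)^2+\pi^2}$. For Carleson's product condition, the bound
$$\log\rho(z_k,z_j)=-\tfrac12\log\!\Bigl(1+\frac{(\log|a|)^2}{\pi^2(j-k)^2}\Bigr)$$
is summable in $j$, uniformly in $k$, so $\inf_k\prod_{j\ne k}\rho(z_k,z_j)>0$ and $B_a$ is an \IBP.

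The one genuinely delicate point is ruling out a singular factor at $1$ in $B_a$; the rest is a routine M\"obius calculation.
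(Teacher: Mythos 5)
Your proposal is correct, and its core is exactly the paper's proof: membership in $\mathfrak I_c$ is an immediate application of Proposition \ref{frost} with $\Theta=S$, since $S\in\mathfrak I_c$ by the horocycle description \reff{horo}. The paper's own argument stops there and simply cites \cite{mo} for the fact that $(S-a)/(1-\ov a S)$ is an \IBP; everything you do beyond the first sentence is a self-contained verification of that cited fact, and it checks out. Your computation $z_k=\phi(\log a+2\pi ik)$ with $\phi(c)=(c+1)/(c-1)$, the estimate $1-|z_k|^2=-4\log|a|\,/\,|c_k-1|^2$, the exclusion of a singular factor at $1$ via $B_a(r)\to -a\neq 0$, and the closed form $\rho(z_j,z_k)=\pi|j-k|/\sqrt{(\log|a|)^2+\pi^2(j-k)^2}$ (giving both separation and the summability needed for Carleson's condition) are all correct. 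In fact your formula makes explicit the two geometric facts the paper invokes for its alternative route through Corollary \ref{we-wo}: the $a$-points lie on a circle internally tangent to $\T$ at $1$ (the $\phi$-image of the vertical line ${\rm Re}\,c=\log|a|$), and consecutive ones are at constant pseudohyperbolic distance $\pi/\sqrt{(\log|a|)^2+\pi^2}$. So your write-up effectively merges the paper's one-line proof with a proof of the auxiliary claim it outsources to \cite{mo}; the only cost is length, and the only caution is that the delicate point you flag (no singular factor) is indeed the step one cannot skip if one insists on proving the \IBP\ statement rather than quoting it.
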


This also follows  from Corollary \ref{we-wo} by noticing that the $a$-points of $S$ are located on 
a disk tangent at $1$ and that the pseudohyperbolic distance between two consecutive ones is 
constant (see \cite{mo}). There it is also shown that the Frostman shift $(S-a)/(1-\ov a S)$ is an \IBP. 

\begin{corollary}
Let $B$ be  a finite Blaschke product and $\Theta\in \mathfrak I_c$. Then $B\circ \Theta\in \mathfrak I_c$.
\end{corollary}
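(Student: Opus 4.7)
The plan is to reduce the statement to a combination of the two properties of $\mathfrak{I}_c$ already established in the paper: closure under Frostman shifts (Proposition \ref{frost}) and closure under multiplication (Proposition \ref{prodi}). Since $B$ is a finite Blaschke product, it factors as
$$B(z) = c \prod_{j=1}^{N} \frac{z - a_j}{1 - \overline{a_j}\, z},$$
with $|c| = 1$ and $a_j \in \D$, where $N$ is the degree of $B$. Composing with $\Theta$ yields
$$B \circ \Theta = c \prod_{j=1}^{N} \frac{\Theta - a_j}{1 - \overline{a_j}\, \Theta}.$$

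Each factor differs from the Frostman shift $(a_j - \Theta)/(1 - \overline{a_j}\, \Theta)$ only by the unimodular constant $-1$, and multiplication by a unimodular constant preserves every level set, so each factor lies in $\mathfrak{I}_c$ by Proposition \ref{frost}. The global unimodular factor $c$ likewise leaves the level sets of the product unchanged. A straightforward induction using Proposition \ref{prodi} then shows that the finite product
$$\prod_{j=1}^{N} \frac{\Theta - a_j}{1 - \overline{a_j}\, \Theta}$$
belongs to $\mathfrak{I}_c$. This gives $B \circ \Theta \in \mathfrak{I}_c$, as desired.

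There is essentially no technical obstacle to overcome: both ingredients (stability under Frostman shifts and under finite products) have been proven earlier, and the factorization of a finite Blaschke product into Möbius factors is standard. The only minor point worth making explicit is that the unimodular constants appearing in the factorization do not affect membership in $\mathfrak{I}_c$, since $\Omega_{\lambda u}(\eta) = \Omega_u(\eta)$ for every $\lambda \in \T$.
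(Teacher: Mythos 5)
Your proof is correct and follows exactly the route the paper intends: the paper's own proof consists of the single line that the corollary ``is a combination of Propositions \ref{frost} and \ref{prodi},'' i.e.\ factoring $B$ into M\"obius factors, applying closure under Frostman shifts to each $\frac{\Theta-a_j}{1-\overline{a_j}\Theta}$, and then closure under finite products. You have merely written out the details (including the harmless unimodular constants) that the authors left implicit.
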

\begin{proof}
This is  a combination of Propositions \ref{frost} and  \ref{prodi}.
\end{proof}

\section{Inner functions not belonging to $\mathfrak I_c$}\label{nonic}

Here we present a class of Blaschke products that are not one-component inner functions.
Recall that a Blaschke product $b$ with  zero-sequence $(z_n)$  is {\it thin} if 
$$\lim_n\prod_{k\not=n} \rho(z_k,z_n)=\lim_{n\to 1} (1-|z_n|^2)|b'(z_n)|=1.$$
It was shown by Tolokonnikov  \cite[Theorem 2.3]{to} that $b$ is thin if and only if
$$\lim_{|z|\to 1} (|b(z)|^2+(1-|z|^2)|b'(z)|)=1.$$

\begin{corollary}\label{thin}
Thin Blaschke products are never one-component inner functions.
\end{corollary}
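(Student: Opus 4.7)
Fix $\eta\in\;]0,1[$ and choose $\sigma\in\;]\eta,1[$; I will show that for every sufficiently large $n$ the zero $z_n$ lies in a connected component of $\Omega_b(\eta)$ that is trapped inside the pseudohyperbolic disk $D_\rho(z_n,\sigma)$, and that these disks are pairwise disjoint. This produces infinitely many components of $\Omega_b(\eta)$, so no such level set can be connected and $b\notin\mathfrak I_c$.

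For the disjointness, set $\alpha_n:=\prod_{k\neq n}\rho(z_n,z_k)$. Thinness means $\alpha_n\to 1$, and since $\alpha_n\leq\inf_{k\neq n}\rho(z_n,z_k)$, the pseudohyperbolic distances from $z_n$ to the other zeros tend to $1$ uniformly in $k$ as $n\to\infty$. The triangle inequality $\rho(a,b)\leq(\rho(a,c)+\rho(c,b))/(1+\rho(a,c)\rho(c,b))$ then forces $D_\rho(z_n,\sigma)\cap D_\rho(z_k,\sigma)=\emp$ whenever $\rho(z_n,z_k)>2\sigma/(1+\sigma^2)$, which holds for every sufficiently large $n$ and every $k\neq n$.

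For the trapping, write $b=b_n\varphi_n$, where $\varphi_n$ is the Blaschke factor vanishing at $z_n$ and $b_n$ is the product of the remaining factors. Schwarz--Pick applied to the inner function $b_n$ gives $\rho(b_n(z),b_n(z_n))\leq\rho(z,z_n)<\sigma$ for $z\in D_\rho(z_n,\sigma)$; since $|b_n(z_n)|=\alpha_n$, a direct computation on the image disk yields the lower bound $|b_n(z)|\geq(\alpha_n-\sigma)/(1-\sigma\alpha_n)$, which tends to $1$ with $n$. On $\partial D_\rho(z_n,\sigma)$ we have $|\varphi_n(z)|=\sigma$, so $|b(z)|\geq\sigma(\alpha_n-\sigma)/(1-\sigma\alpha_n)>\eta$ for all large $n$. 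Hence the component of $\Omega_b(\eta)$ that contains $z_n$ cannot cross $\partial D_\rho(z_n,\sigma)$, and by the disjointness we obtain distinct components for each large $n$.

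The main technical point is the uniform control of $|b_n|$ throughout $D_\rho(z_n,\sigma)$, not merely at $z_n$; this is the only place where thinness (rather than mere interpolation as in Theorem \ref{one-co}) is essential, because Hoffman's lemma only yields $|b_n(z_n)|$ bounded away from zero, whereas here we need $|b_n(z_n)|\to 1$ in order to push $|b|$ above $\eta$ on the full circle $\partial D_\rho(z_n,\sigma)$.
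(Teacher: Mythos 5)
Your proof is correct, and it takes a genuinely different route from the paper's. The paper's argument runs through Hoffman's Lemma (Theorem \ref{lemm}): it deletes finitely many zeros to obtain a tail $b_N$ with interpolation constant at least $\delta$, covers the sublevel set $\{|b_N|<\e\}$ by the pairwise disjoint disks $D_\rho(z_n,\eta)$, and then reinstates the deleted finite factor $p=\prod_{n\le N}\rho(\cdot,z_n)$ by restricting to an annulus $r\le |z|<1$ where $|p|\ge\e$, concluding that $\{|b|<\e^2\}$ meets that annulus only inside the disjoint disks. You instead factor $b=b_n\varphi_n$ at each single zero and use Schwarz--Pick on $b_n$ together with $|b_n(z_n)|=\alpha_n\to 1$ to get the uniform lower bound $|b|\ge\sigma(\alpha_n-\sigma)/(1-\sigma\alpha_n)>\eta$ on the circles $\partial D_\rho(z_n,\sigma)$, trapping the component of $z_n$ inside $D_\rho(z_n,\sigma)$; disjointness of the disks for large $n$ follows from $\rho(z_n,z_k)\ge\alpha_n$ and the $\rho$-triangle inequality. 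Your version is more elementary (it bypasses Hoffman's Lemma entirely, which is natural since thinness gives you $|b_n(z_n)|\to 1$ rather than merely a uniform positive lower bound) and it is slightly sharper in presentation: it shows directly that \emph{every} level set $\Omega_b(\eta)$, $0<\eta<1$, has infinitely many components, whereas the paper reaches the same conclusion by letting $\e\to 1$ and invoking the monotonicity of connectedness (Lemma \ref{com-bi}). Your closing remark correctly isolates where thinness, as opposed to mere interpolation, is used. One cosmetic point: on $\partial D_\rho(z_n,\sigma)$ you have $\rho(z,z_n)=\sigma$, so the Schwarz--Pick step should read $\rho(b_n(z),b_n(z_n))\le\sigma$ rather than $<\sigma$; the bound you derive from it is stated with the correct non-strict inequality, so nothing breaks.
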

\begin{proof}

Let $\e\in \;]0,1[$ be arbitrary close to $1$.  Choose $\eta>0$ and $\delta>0$ so close to 1 so that 
$$\mbox{$\e<\eta^2$ and $\dis \eta < (1-\sqrt {1-\delta ^2})/ \delta$}.$$
By deleting finitely many zeros, say $z_1,\dots, z_N$ of $b$, we obtain a tail $b_N$
such that $(1-|z_n|^2)|b_N'(z_n)|\geq \delta$ for every $n>N$.  Hence, by Theorem \ref{lemm},
\begin{equation}\label{bn}
\{z \in \D:  |b_N(z)| < \e \} \ss \{z \in \D: \rho(z, Z(b_N)) < \eta\}
\end{equation}
and the disks $D(z_n,\eta)$ are pairwise disjoint. This implies that  the level set 
$\{z \in \D:  |b_N(z)| < \e \}$
is not connected.  Now choose $r$ so close to 1 that 
$$p(z):=\prod_{n=1}^N \rho(z,z_n)\geq \e$$
for every $z$ with $r\leq |z|<1$.  We show that
the level set $\{|b|<\e^2\}$ is not connected. In fact,   for  some $r\leq |z|<1$ we have $|b(z)|<\e^2$, 
then
$$|b_N(z)|= \frac{|b(z)|}{|p(z)|}<\frac{\e^2}{\e}=\e.$$
Hence 
$$ \{z: r<|z|<1,  |b(z)|<\e^2\}\ss \{|b_N(z)|<\e\}\buildrel\ss_{}^{\reff{bn}}  \Union_{n>N} D(z_n,\eta).$$
Since the disks $D_\rho(z_n,\eta)$ are pairwise disjoint if $n>N$, we are done.
\end{proof}

\begin{corollary}\label{2bps}
No finite product $B$ of thin \IBP s belongs to $\mathfrak I_c$. 
\end{corollary}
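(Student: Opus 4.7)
The plan is to adapt the proof of Corollary~\ref{thin} by applying Hoffman's Lemma simultaneously to every thin factor and combining it with a pigeonhole argument on the $k$ factors. Write $B=b_1\cdots b_k$ with $(z_n^{(j)})_n$ the zeros of $b_j$, and fix $\varepsilon\in\;]0,1[$ close to $1$. I will show that $\Omega_B(\varepsilon^{k+1})$ is disconnected; since $\varepsilon^{k+1}\to1$ as $\varepsilon\to1$, Lemma~\ref{com-bi} then forces $B\notin\mathfrak I_c$.

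Fix Hoffman constants $\eta,\delta\in\;]0,1[$ close to $1$ satisfying the hypotheses of Theorem~\ref{lemm} with threshold $\varepsilon$, and choose an auxiliary $\rho_0\in\;]0,1[$ so that $d_h(\rho_0)>2k\,d_h(\eta)$, where $d_h(x)=\tfrac{1}{2}\log\tfrac{1+x}{1-x}$ is the hyperbolic radius. Using thinness, delete finitely many zeros from each $b_j$ so that the tail $b_j^{(N_j)}$ satisfies (a)~$\delta(b_j^{(N_j)})\geq\delta$ and (b)~$\rho(z_n^{(j)},z_m^{(j)})\geq\rho_0$ for all distinct $n,m>N_j$. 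Split $B=B^f\cdot B^{\mathrm{tail}}$ with $B^f$ a finite Blaschke product and $B^{\mathrm{tail}}=\prod_j b_j^{(N_j)}$, and pick $r<1$ with $|B^f|\geq\varepsilon$ on $\{|z|\geq r\}$. For $|z|\geq r$ with $|B(z)|<\varepsilon^{k+1}$ we have $\prod_j|b_j^{(N_j)}(z)|<\varepsilon^k$, so the minimum of these $k$ quantities is at most their geometric mean, hence $<\varepsilon$. Hoffman's Lemma then puts $z$ in $\mathcal D:=\bigcup_j\bigcup_{n>N_j}D_\rho(z_n^{(j)},\eta)$; thus $\Omega_B(\varepsilon^{k+1})\cap\{|z|\geq r\}\subseteq\mathcal D$.

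The crucial step is the cluster-size bound: every connected component $\mathcal C$ of $\mathcal D$ contains at most $k$ of the defining disks. If $|\mathcal C|\geq k+1$, pigeonhole on the labels $j\in\{1,\dots,k\}$ produces two disks $D_\rho(z_a^{(j)},\eta)$ and $D_\rho(z_b^{(j)},\eta)$ in $\mathcal C$ with $a\neq b$, and by connectedness their centres are joined by a path of length at most $k$ in the intersection graph (the diameter of a connected graph on $k+1$ vertices is at most $k$), giving $d_h(z_a^{(j)},z_b^{(j)})\leq 2k\,d_h(\eta)$; but (b) forces $d_h(z_a^{(j)},z_b^{(j)})\geq d_h(\rho_0)>2k\,d_h(\eta)$, a contradiction. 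Hence each $b_j$ contributes at most one disk per cluster, and since each single $b_j$ already produces infinitely many pairwise disjoint disks in $\mathcal D$, there are infinitely many clusters.

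On $\partial\mathcal C$ we have $z\notin\mathcal D$, so Hoffman's Lemma yields $|b_j^{(N_j)}(z)|\geq\varepsilon$ for every $j$, whence $|B(z)|\geq|B^f(z)|\,\varepsilon^k\geq\varepsilon^{k+1}$ whenever $\mathcal C\subseteq\{|z|\geq r\}$. As the disks $D_\rho(z_n^{(j)},\eta)$ shrink to $\partial\mathbb D$ as $n\to\infty$, all but finitely many clusters lie inside $\{|z|\geq r\}$ and are therefore sealed: any component of $\Omega_B(\varepsilon^{k+1})$ meeting such a cluster is contained in it. Each such cluster contains a zero of $B$, so $\Omega_B(\varepsilon^{k+1})$ has infinitely many components, proving disconnectedness. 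The main obstacle is the cluster-size bound of paragraph~3, where thinness of the individual factors must be converted, via hyperbolic geometry and pigeonhole, into a uniform bound on the interaction of the $k$ zero sets; once this is in hand, the sealing step is a direct extension of the single-thin-IBP argument of Corollary~\ref{thin}.
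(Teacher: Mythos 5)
Your argument follows the paper's strategy: Hoffman's Lemma converts the sublevel set near the boundary into a union of small pseudohyperbolic disks about the zeros, and the separation of each individual factor's zeros (coming from thinness) bounds the size of the resulting clusters, so the level set splits into infinitely many pieces. The paper, however, only treats $k=2$ explicitly (a disk of one factor meets at most one disk of the other) and dismisses the general case with ``works via induction''; you argue directly and uniformly in $k$ via the pigeonhole, and you also make explicit two points the paper glosses over, namely the restriction to the tails $b_j^{(N_j)}$ on an annulus $\{|z|\geq r\}$ and the ``sealing'' step showing that a component of $\Omega_B(\varepsilon^{k+1})$ meeting a far-out cluster cannot escape it. That is a genuine improvement in completeness.

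One step needs repair. In the cluster-size bound you apply the pigeonhole to \emph{all} disks of a component $\mathcal C$ and then claim the two same-labelled disks are joined by a path of length at most $k$; the parenthetical justification (diameter of a connected graph on $k+1$ vertices) is only valid if the intersection graph has exactly $k+1$ vertices, whereas a priori $\mathcal C$ could contain many disks and the two disks produced by the pigeonhole could be far apart in the graph. Reverse the order: if $\mathcal C$ contains at least $k+1$ disks, first extract a connected subfamily of exactly $k+1$ of them (take a spanning tree of the intersection graph and delete leaves until $k+1$ vertices remain), and pigeonhole inside that subfamily; then the path length really is at most $k$ and your choice $d_h(\rho_0)>2k\,d_h(\eta)$ suffices. (Alternatively, a breadth-first search gives two same-labelled disks at graph distance at most $2k-1$, at the cost of demanding $d_h(\rho_0)>2(2k-1)\,d_h(\eta)$.) With that adjustment the proof is correct.
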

\begin{proof}
Let $\e\in \;]0,1[$ be arbitrary close to 1. By Corollary \ref{thin}, if $b_j$, $(j=1,2)$, are 
two thin \BP s with  zero-sequence $(z_n^{(j)})_n$,
$$\Omega_{b_j}(\e)\ss \Union_{n=1}^\infty D_\rho(z^{(j)}_n,\eta)$$
for suitable $\eta$, the disks $D_\rho(z^{(j)}_n,\eta)$, being pairwise disjoint for $n$  large.
Since $\lim_n\rho(z_n^{(j)}, z_{n+1}^{(j)})=1$, we see that a disk $D_\rho(z^{(1)}_n,\eta)$
can meet at most one disk $D_\rho(z^{(2)}_m,\eta)$ for $n$ large. Hence 
$$\Omega_{b_1b_2}(\e^2)\ss \Union_{j=1}^2\Union_{n=1}^\infty D_\rho(z^{(j)}_n,\eta),$$
where the set on the right hand side obviously is disonnected.
The general case works via induction.
\end{proof}

\begin{remark}
The conditions 
\begin{equation}\label{no-one-co}
\eta^*:=\sup_{n\in \N}\rho(z_n, Z(b)\setminus \{z_n\})<1,
\end{equation}
or equivalently 
\begin{equation}\label{no-one-coco}
\mbox{$D(z_n,\eta)\inter \Union_{m\not= n}D(z_m, \eta)\not=\emp$ for some
$\eta\in ]0,1[$},
\end{equation}
are not sufficient to guarantee that the \IBP\ $b$ is a one-component inner function.
\end{remark}
\begin{proof}

Take $z_{2n}=1-n^{-n}$ and $z_{2n+1}=1-(n^{-n}+n^{-n})$.  Then $(z_{2n})$ and $(z_{2n+1})$ are (thin) interpolating sequences 
 by \cite[Corollary 2.4]{gm}. Using with $a=n^{-n}$ and $b=2a$ the identity
 $$\rho(1-a,1-b)=\frac{|a-b|}{a+b-ab},$$
 we conclude that
$$\rho(z_{2n}, z_{2n+1})= \frac{n^{-n}}{1-z_{2n} z_{2n+1}}\to 1/3,$$
and so
the union $(z_n)$ is an interpolating sequence satisfying \reff{no-one-coco}. By Corollary \ref{2bps},
the \BP\ formed with the zero-sequence $(z_n)$ is not in $\mathfrak I_c$.

\end{proof}

Using the following theorem in  \cite{ber},  we can exclude a much larger class of
Blaschke products from being one-component inner functions:
\begin{theorem}[Berman]
Let $u$ be an inner function. Then, for every $\e\in \;]0,1[$,  all the  components of the 
level sets $\{z\in \C: |u(z)|<\e\}$ have compact closures  in $\D$ if and only if $u$ is a Blaschke product and 
$$\mbox{$\limsup_{r\to 1} |u(r\xi)|=1$ for every $\xi\in \T$}.$$
\end{theorem}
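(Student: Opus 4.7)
My plan is to treat the two implications separately: the forward direction reduces to a case analysis on the canonical factorization of $u$ and on the radial‐limsup hypothesis, while the reverse requires classical boundary‐behavior tools.

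For the forward direction ($\Rightarrow$), assume every component of every $\Omega_u(\varepsilon)$ has compact closure in $\D$, and factor $u = B\cdot S_\mu$ into its Blaschke and singular parts. If $S_\mu$ were nontrivial, I would pick $\xi_0 \in \supp\mu$ at which the radial limit of $S_\mu$ is zero: for an atom of $\mu$ at $\xi_0$ the horodisk $\Omega_S(\eta)$ from \reff{horo} is contained in $\Omega_{S_\mu}(\eta)\subset\Omega_u(\eta)$, and for the continuous singular part one uses the classical fact that the symmetric derivative $D\mu$ equals $+\infty$ at $\mu$-a.e. $\xi_0$, forcing $|S_\mu(r\xi_0)|\to 0$ and therefore an initial radial segment at $\xi_0$ to lie in $\Omega_u(\eta)$. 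Either way the component of $\Omega_u(\eta)$ containing this connected set has $\xi_0$ in its closure, contradicting compactness, so $u$ must be Blaschke. The radial‐limsup condition is forced by the same mechanism: if $\limsup_{r\to 1}|u(r\xi_0)|=\lambda<1$ for some $\xi_0$, then for any $\eta\in(\lambda,1)$ a tail of the radial segment at $\xi_0$ lies in $\Omega_u(\eta)$, again producing a component with $\xi_0$ in its closure.

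For the reverse direction ($\Leftarrow$), suppose $u=B$ is a Blaschke product with $\limsup_{r\to 1}|u(r\xi)|=1$ for every $\xi\in\T$, and assume, for contradiction, that some component $\Omega_0$ of some $\Omega_u(\eta)$ has $\xi_0\in \ov{\Omega_0}\cap\T$. By Lemma~\ref{compos1}, $\Omega_0$ is simply connected, so I take a Riemann map $\phi:\D\to\Omega_0$ and consider the bounded analytic function $v:=u\circ\phi$ on $\D$, which satisfies $|v|<\eta$. The aim is to produce a point $\xi\in\T$ at which $u$ has a non-tangential limit of modulus $\le\eta$; since the radial segment is non-tangential, this would force $\limsup_{r\to 1}|u(r\xi)|\le\eta<1$, contradicting the hypothesis. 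By Fatou's theorem, $v$ has radial limits at a.e. $\zeta\in\T$ and $\phi$ has non-tangential boundary values $\phi^*(\zeta)\in\ov{\Omega_0}$ for a.e. $\zeta$. If one can pick such a $\zeta$ with $\phi^*(\zeta)\in\partial\Omega_0\cap\T$, then the curve $\gamma(t):=\phi(t\zeta)$ terminates at $\phi^*(\zeta)\in\T$ with $u(\gamma(t))\to v^*(\zeta)$, and Lindelöf's theorem produces a non-tangential limit of $u$ at $\phi^*(\zeta)$ of modulus $|v^*(\zeta)|\le\eta$, completing the contradiction.

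The main obstacle is that the measurable set $\{\zeta\in\T:\phi^*(\zeta)\in\T\}$ may have Lebesgue measure zero — equivalently, $\partial\Omega_0\cap\T$ can carry zero harmonic measure in $\Omega_0$, which corresponds to a purely tangential approach of $\Omega_0$ to $\T$ at $\xi_0$. In this subcase the generic-$\zeta$ extraction above fails, and one must work with the specific prime-end pre-image $\zeta_0\in\T$ of $\xi_0$ under $\phi$ and invoke a cluster-set version of Lindelöf's theorem applied to the curve $t\mapsto \phi(t\zeta_0)$ in order to show that the non-tangential cluster set of $u$ at $\xi_0$ itself meets $\{|w|\le\eta\}$; combined with the Blaschke structure of $u$ (radial limits of modulus $1$ on a set of full measure) and the standing hypothesis $\limsup_{r\to 1}|u(r\xi_0)|=1$, this yields the required contradiction. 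The careful treatment of this tangential regime is the technical heart of Berman's argument in \cite{ber}.
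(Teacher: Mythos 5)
First, a point of reference: the paper gives no proof of this statement. It is imported verbatim from Berman \cite{ber} and used as a black box, so your attempt cannot be checked against an internal argument; it has to stand on its own.

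Your forward implication is essentially sound: if $u$ has a nontrivial singular factor, then $D\mu=+\infty$ at $\mu$-a.e.\ point forces $|u(r\xi_0)|\le|S_\mu(r\xi_0)|\to 0$ along some radius, and if $\limsup_{r\to1}|u(r\xi_0)|<1$ then a radial tail lies in some $\Omega_u(\eta)$; either way the component containing that tail has $\xi_0$ in its closure, which is the required contradiction.

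The reverse implication, however, contains a genuine gap, and it is worse than your own caveat suggests: the ``subcase'' you defer to Berman is in fact the \emph{only} case. For any inner function the set $E=\ov{\Omega_0}\cap\T$ has linear measure zero --- this is exactly the result \cite[Theorem 4.2]{ber} that the paper invokes in the proof of Lemma~\ref{compos1}(2) --- and a subset of $\T$ of linear measure zero carries zero harmonic measure with respect to any subdomain $\Omega_0\ss\D$ (compare $\omega(\cdot,E,\Omega_0)$ with $\omega(\cdot,F,\D)$ for open $F\supset E$ of small length, using the maximum principle, and let $|F|\to0$). Since the distribution of $\phi^*$ under normalized Lebesgue measure is harmonic measure for $\Omega_0$, the set $\{\zeta\in\T:\phi^*(\zeta)\in\T\}$ is \emph{always} a null set, so the Fatou--Lindel\"of extraction that constitutes your main argument never applies. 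The entire burden thus falls on the tangential regime, where you only gesture at ``a cluster-set version of Lindel\"of's theorem applied to $t\mapsto\phi(t\zeta_0)$''; but $\xi_0$ need not be accessible from $\Omega_0$ by any curve, the radial limit of $\phi$ at a prescribed prime-end preimage need not exist, and even granting a curve in $\Omega_0$ ending at $\xi_0$ on which $|u|<\eta$, converting that into information about $\limsup_{r\to1}|u(r\xi_0)|$ along the radius (for a function with no limit at $\xi_0$) is precisely the hard content of Berman's theorem. As written, the decisive step is assumed rather than proved.
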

In particular this condition is satisfied by finite products of thin Blaschke products (see 
\cite[Proposition 2.2]{gomo})
as well as by the class of uniform Frostman Blaschke products
$$\sup_{\xi\in \T} \sum_{n=1}^\infty \frac{1-|z_n|^2}{|\xi-z_n|}<\infty.$$
Note that this Frostman condition implies that the associated \BP\ has radial limits  of modulus
one everywhere \cite[p. 33]{cl}. As a byproduct of Theorem \ref{one-co} we therefore obtain
\begin{corollary}
If $b$  is a uniform Frostman Blaschke product with zeros $(z_n)$ clustering at a single point, 
then $\limsup_\rho(z_n,z_{n+1})=1$.
\end{corollary}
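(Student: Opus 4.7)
My plan is to argue by contradiction using Berman's theorem. I would suppose that $\sigma_0 := \limsup_n \rho(z_n, z_{n+1}) < 1$ and fix some $\sigma$ with $\sigma_0 < \sigma < 1$. Then I can pick $N$ so large that $\rho(z_n, z_{n+1}) < \sigma$ for every $n \geq N$; consecutive pseudohyperbolic disks $D_\rho(z_n, \sigma)$ and $D_\rho(z_{n+1}, \sigma)$ will therefore overlap, so
$$G := \bigcup_{n \geq N} D_\rho(z_n, \sigma)$$
is a connected subset of $\D$.

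The key step---and essentially the only non-routine one---is to embed $G$ into a sublevel set of $b$, exactly as is done inside the proof of Theorem \ref{one-co}: by the Schwarz--Pick inequality, for $z \in D_\rho(z_n, \sigma)$ one has
$$|b(z)| = \rho(b(z), b(z_n)) \leq \rho(z, z_n) < \sigma,$$
hence $G \ss \Omega_b(\sigma)$. I would then let $\Omega_1$ denote the component of $\Omega_b(\sigma)$ that contains the connected set $G$. Since by hypothesis the zeros cluster at a single boundary point $\xi \in \T$, one has $z_n \to \xi$; and because $\{z_n : n \geq N\} \ss \Omega_1$, this forces $\xi \in \ov{\Omega_1}$, so $\ov{\Omega_1}$ fails to be a compact subset of $\D$.

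The final step is to invoke Berman's theorem. As recorded immediately after its statement, a uniform Frostman Blaschke product has radial limits of modulus one at every boundary point, so in particular $\limsup_{r\to 1}|b(r\eta)|=1$ for every $\eta\in\T$. Berman's criterion therefore forces every component of the level set $\Omega_b(\sigma)$ to have compact closure in $\D$, which contradicts what was just established about $\Omega_1$. The main obstacle I foresee is essentially cosmetic: one has to recognise that the Schwarz--Pick chaining mechanism already built into the proof of Theorem \ref{one-co} is exactly what is needed to feed Berman's hypothesis; once that link is noticed, the argument collapses into a three-line contradiction.
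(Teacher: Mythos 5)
Your proof is correct and is precisely the argument the paper intends: the paper offers no written proof beyond the remark that the corollary is a ``byproduct of Theorem \ref{one-co}'', combined with Berman's theorem and the cited fact that uniform Frostman Blaschke products have unimodular radial limits everywhere. If anything, your version is slightly more careful, since by extracting only the Schwarz--Pick chaining step from the proof of Theorem \ref{one-co} (rather than invoking its statement) you avoid having to assume that $b$ is interpolating, which the literal hypotheses of that theorem would require.
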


\begin{question}
{\rm To conclude, we would like to ask two questions and present three problems}:

\begin{enumerate}
\item [(1)] Can every inner function $u$ whose  boundary spectrum $\rho(u)$ has measure zero, be multiplied by a one-component inner function into $\mathfrak I_c$?
\item[(2)] Let $S_\mu$ be a singular inner function with countable spectrum. 
Give a characterization of those measures $\mu$ such that $S_\mu\in \mathfrak I_c$.
Do the same for singular continuous measures.
\item [(3)] In terms of the zeros, give a characterization of those \IBP s that belong to $\mathfrak I_c$.
\item[(4)]  Does the \BP\ $B$ with zeros $z_n=1-n^{-2}$ belong to $\mathfrak I_c$?
\end{enumerate}

\end{question}

\section{ Acknowledgement} 

We thank Rudolf Rupp and Robert Burckel for their valuable comments concerning 
Lemma \ref{compos1} (1),  the proof of which was orginally developed for the upcoming monograph \cite{moru}.


\end{document}